\documentclass[10pt, reqno]{amsart}
\usepackage{epsfig, amssymb, amsmath, version}
\usepackage{amssymb, version, graphicx, fancybox, mathrsfs, multirow}
\usepackage{epstopdf}
\usepackage{url, hyperref}
\usepackage{bm}
\usepackage{subcaption}

\usepackage{float}
\usepackage{enumitem}
\usepackage{multirow}

\usepackage{color, xcolor}
\usepackage{cases}
\usepackage{mathtools}
\usepackage{extarrows}
\usepackage{bbm}

\usepackage{booktabs}

\catcode`\@=11 \theoremstyle{plain}
\@addtoreset{equation}{section}   

\@addtoreset{figure}{section}
\renewcommand\thefigure{\thesection. \@arabic\c@figure}
\renewcommand{\thefigure}{\arabic{section}.\arabic{figure}}
\newtheorem{thm}{\bf Theorem}

\newenvironment{theorem}{\begin{thm}} {\end{thm}}
\newtheorem{cor}{\bf Corollary}

\newtheorem{lmm}{\bf Lemma}

\newenvironment{lemma}{\begin{lmm}}{\end{lmm}}

\theoremstyle{remark}
\newtheorem{rem}{\bf Remark}[section]

\theoremstyle{definition}

\numberwithin{table}{section}
\usepackage{appendix}

\allowdisplaybreaks[4]

\renewcommand \wedge \times

\begin{document}
	\bibliographystyle{plain}
	\graphicspath{{./figures/}}
\title[A numerical study for tempered time-fractional]
	 {A numerical study for tempered time-fractional advection-dispersion equation on graded meshes}
	\author[L. Huang,\quad L. Li,\quad S. Lü]
{\;\; Liangcai Huang${ }^{\dagger}$,\quad Lin Li${ }^{\ddagger}{ }^{\S}$,\quad  Shujuan Lü${ }^{\dagger}$ \;\;}
	
	\thanks{${ }^{\dagger}$School of Mathematical Sciences, Beihang University, Beijing 102206, China. (hlcsy@buaa.edu.cn, lsj@buaa.edu.cn).  \\
    \indent ${ }^{\ddagger}$School of Mathematics and Physics, University of South China, Hengyang 421001, China. (lilinmath@usc.edu.cn).\\
     \indent ${ }^{\S}$Corresponding author.
			}
\begin{abstract}
In this paper, we develop a second-order accurate time-stepping scheme for the tempered time-fractional advection–dispersion equation based on a sum-of-exponentials (SOE) approximation to the convolution kernel involved in the fractional derivative. To effectively resolve the weak initial-time singularity at $t=0$, graded temporal meshes are employed. A fully discrete scheme is constructed by coupling the proposed half-time-level temporal discretization with a finite difference method in space. Compared with the classical L1 scheme, the proposed SOE-based method achieves the same global convergence order while reducing both storage requirements and computational cost. Specifically, the storage demand is reduced from $\mathcal{O}(MN)$ to $\mathcal{O}(MN_{\mathrm{exp}})$, and the computational complexity is lowered from $\mathcal{O}(MN^2)$ to $\mathcal{O}(MN N_{\mathrm{exp}})$, where $M$ and $N$ denote the numbers of spatial and temporal grid points, respectively, and $N_{exp}$ is the number of exponential terms used in the SOE approximation. The unique solvability, stability
and accuracy of the resulting scheme are rigorously analyzed. Several numerical results are presented to confirm the sharpness of the error analysis and to demonstrate the efficiency of the proposed method.

\end{abstract}
\keywords{tempered time-fractional advection-dispersion equation; graded meshes; initial boundary value problem; error estimate}
\subjclass[2020]{65M06, 65M12, 35R11, 35S10}	
\maketitle
\section{Introduction}
Tempered fractional calculus is a refined extension of classical fractional calculus, designed to model complex phenomena characterized by anomalous diffusion and nonlocal transport. By introducing an exponential tempering factor into power-law kernels, it effectively overcomes the infinite-moment difficulty inherent in classical fractional models while preserving their nonlocal memory features. The concept of truncated Lévy flights was first proposed by Koponen \cite{koponen1995} as a smooth modification of the model introduced by Mantegna \cite{mantegna1994}. Subsequently, Cartea and del Castillo-Negrete \cite{cartea2007} developed the tempered fractional calculus framework to ensure the finiteness of moments in Lévy distributions arising from power-law waiting times. Since then, tempered fractional models have found widespread applications in many fields, including finance \cite{carr2002, carr2003, zhou2024}, geophysics \cite{meerschaert2008, xia2013, zhang2011, zhang2012}, and groundwater hydrology \cite{zhang2014}. By capturing both memory effects and non-Gaussian transport behavior, tempered fractional calculus significantly enhances the modeling and prediction of complex systems.

The application of tempered fractional models to real-world problems requires efficient and accurate numerical methods for solving tempered fractional differential equations. In recent years, tempered fractional operators have been incorporated into various mathematical models, such as the substantial fractional equation \cite{huang2018}, the Bloch equation \cite{feng2022}, the tempered fractional Burgers equation \cite{dwivedi2025, wang2022}, the tempered fractional Jacobi equation \cite{zhao2023}, and the tempered fractional Feynman–Kac equation \cite{zhao2024}. Among these, the numerical solution of the Caputo time-tempered fractional diffusion equation (TTFDE) has attracted considerable attention \cite{dwivedi2024, fenwick2024, krzyzanowski2024, morgado2019, safari2022, zhao2020, zhou2024}, which is defined as
\begin{align}
{ }_0^C D_t^{\alpha, \lambda} u = \frac{\partial^2 u}{\partial x^2} - \frac{\partial u}{\partial x} + f(x,t), \quad (x,t) \in \Omega \times (0,T], \label{no1}
\end{align}
where $\lambda > 0$ and $0 < \alpha < 1$. The Caputo tempered fractional derivative is defined by
\begin{align}
{ }_0^C D_t^{\alpha, \lambda} u(t) = \frac{e^{-\lambda t}}{\Gamma(1-\alpha)} \int_0^t \frac{(e^{\lambda s} u(s))'}{(t-s)^\alpha} ds, \label{tempered}
\end{align}
where $\Gamma(\cdot)$ denotes the Gamma function. When $\lambda = 0$, this operator reduces to the classical Caputo fractional derivative
$$
{ }_0^C D_t^\alpha u(t)=\frac{1}{\Gamma(1-\alpha)} \int_0^t \frac{u'(s)}{(t-s)^{\alpha}}  d s. 
$$
A variety of numerical methods have been proposed for solving tempered fractional diffusion-type equations(TFDE). Chen \cite{chen2018} define a family of generalized Laguerre functions and derive some useful formulas of tempered fractional integrals/derivatives. Morgado \cite{morgado2019} developed a finite difference method for tempered fractional advection–diffusion equations and achieved a temporal accuracy of $\min\{2-\alpha, r\alpha\}$ and second-order spatial accuracy using a graded temporal mesh. Zhao \cite{zhao2020} proposed a finite difference method for time-fractional subdiffusion equations, obtaining a temporal accuracy of $(2-\alpha)$ and second-order spatial accuracy for smooth solutions on uniform grids. Safari \cite{safari2022} employed a local discontinuous Galerkin method for time–space tempered fractional diffusion equations and obtained a temporal accuracy of $\min\{2-\alpha, r\delta\}$ with high-order spatial accuracy. Zhou \cite{zhou2024} developed a fast finite difference method for tempered time-fractional diffusion–reaction equations, achieving a temporal accuracy of $\min\{2-\alpha, r\alpha\}$ and fourth-order spatial accuracy. More recently, Krzyżanowski \cite{krzyzanowski2024} and Fenwick \cite{fenwick2024} proposed finite difference and fast finite difference methods, respectively, both achieving a temporal accuracy of $(2-\alpha)$ and second-order spatial accuracy for smooth solutions. Dwivedi \cite{dwivedi2024} developed a fast scheme for a two-dimensional tempered fractional reaction–advection–subdiffusion equation, obtaining a temporal accuracy of $(1+\alpha)$ and fourth-order spatial accuracy. 

In this paper, we consider the tempered time-fractional advection–dispersion equation (TTFADE) proposed by Xia \cite{xia2013} and Meerschaert \cite{meerschaert2008}, compared with the TTFDE, the TTFADE additionally involves a first-order time derivative term, which plays an important role in modeling and restoring groundwater environments. The TTFADE model is given by:
\begin{subequations}\label{Equation}
\begin{align}
&\frac{\partial u}{\partial t} + { }_0^C D_t^{\alpha, \lambda} u
= \frac{\partial^2 u}{\partial x^2} - \frac{\partial u}{\partial x} + f(x,t),
& &(x,t) \in \Omega \times (0,T], \label{Equation1} \\
&u(0,t) = 0,\quad u(L,t) = 0,
& &t \in (0,T], \label{Equation2} \\
&u(x,0) = \phi(x),
& &x \in \Omega, \label{Equation3}
\end{align}
\end{subequations}
where $\lambda > 0$ and $0 < \alpha < 1$. The truncation error analysis and convergence analysis of the proposed scheme are carried out under the following regularity assumptions \cite{chen2019,feng2022,wang2023,safari2022,stynes2017,zhang2021}. We assume that the problem \ref{Equation} has a unique solution $u$ and there is a constant $C > 0$ such that
\begin{subequations}\label{regularity}
\begin{align}
&|\partial_x^k u(x, t)| \leq C,
&& (x, t) \in \Omega \times [0, T],\;k=0,1,2,3,4, \label{regularity1} \\
&|\partial_t^l u(x, t)| \leq C(1 + t^{\delta-l}),
&& (x, t) \in \Omega \times (0, T], \; l=0,1,2,3, \label{regularity2} \\
&|\partial_t^2 \partial_x^p u(x, t)| \leq C(1 + t^{\delta-2}),
&& (x, t) \in \Omega \times (0, T], \; p=1,2,\;1<\delta<2. \label{regularity3}
\end{align}
\end{subequations}
For this model, before introducing our approach, it is necessary to clarify the underlying motivations and position our contribution relative to existing work. Cao \cite{cao2020} proposed a fast finite difference/finite element method based on graded temporal meshes, achieving first-order accuracy in time and high-order accuracy in space for problems with weakly singular solutions. In contrast, the method developed in our work attains second-order temporal accuracy under the same low-regularity setting. Qiao \cite{qiao2022} constructed a second-order backward differentiation formula coupled with a second-order convolution quadrature, yielding temporal accuracy of order $(1+\alpha)$ and second-order spatial accuracy. However, their analysis relies on solution smoothness assumptions, whereas our results are established for weakly singular solutions. In addition, several related studies focus primarily on smooth solutions and on equations with $\lambda = 0$. For instance, Liu \cite{Liu2018} investigated variable-order fractional equations using finite difference methods, obtaining first-order temporal and second-order spatial accuracy under smoothness assumptions. Ravi Kanth \cite{RaviKanth2018} proposed a finite difference scheme that achieves $(2-\alpha)$-th order convergence in time and second-order convergence in space for smooth solutions. Nong \cite{nong2024} developed a fast $L2-1_{\sigma}$ scheme and proved second-order convergence in both time and space, again for smooth solutions.

It is well known that the principal numerical challenges in tempered fractional models arise from the presence of a weakly singular kernel and the exponential tempering factor in the fractional operator. Moreover, it is highly desirable to design efficient numerical schemes that maintain high accuracy even when the solution exhibits low regularity near the initial time \eqref{regularity}. In this setting, most existing methods achieve at most $(2-\alpha)$-th order accuracy in time. To address these challenges, we develop in this paper a finite difference scheme for the TTFADE \eqref{Equation} based on a sum-of-exponentials (SOE) approximation. The main features and contributions of our work can be summarized as follows.
\begin{enumerate}[leftmargin=15pt]
    \item The proposed scheme employs graded temporal meshes to better capture the initial singular behavior and evaluates the discrete operator at half-time levels to enhance accuracy.
    \item  We provide rigorous proofs of stability and convergence for the scheme, establishing second-order accuracy in both time and space under the weak singularity assumptions.
    \item Three numerical cases are presented to verify both the accuracy and computational efficiency of the method. Compared with several existing approaches, the proposed scheme produces numerical results more efficiently and with reduced computational cost (see the examples in Sect.~\ref{section4}).
\end{enumerate}

The rest of the paper is organized as follows. In the next Sect.\ref{charpter2}, we present the discrete scheme and error estimation to the tempered Caputo fractional derivative. In Sect.\ref{section3}, we construct a fully discrete
scheme for the tempered time-fractional advection–dispersion equation with weakly singular solution and present the well-posedness of the scheme. In Sect.\ref{section4}, some numerical results are presented to confirm
the theoretical analysis results and demonstrate the computational efficiency of the method. Finally, concluding remarks are given in Sect.\ref{section5}.
\section{The discrete scheme and error estimation to the Tempered Caputo fractional Derivative }
\label{charpter2}
In this section, we develop a discrete scheme based on the sum-of-exponentials (SOE) approximation of the kernel function for the tempered Caputo fractional derivative defined in \eqref{tempered}, and derive rigorous error estimates for the proposed method.

We first introduce the graded temporal grid
\begin{align}
T_N = \{ t_n \mid t_n = T (n/N)^r, \; 0\le n \le N\} \label{graded}
\end{align}
on the interval $(0, T]$, where $N$ is a positive integer, $r \ge 1$, and the variable time step is defined by $\tau_n = t_n - t_{n-1}$. It is well known that graded meshes are an effective tool for resolving weak initial singularities in fractional evolution equations (see, e.g., \cite{Brunner1985,cao2020,Liao2018,safari2022}).

To achieve second-order accuracy in time for the fully discrete scheme, we introduce the midpoint time level
\begin{align}
    \bar{t}_n = \frac{1}{2}(t_n + t_{n+1}),
\end{align}
and approximate the temporal derivative at $\bar{t}_n$. The basic idea is to decompose the fractional derivative at $\bar{t}_n$ into a history part and a local part, namely,
\begin{align}
{ }_0^C D_t^{\alpha,  \lambda} u(\bar{t}_n)\coloneqq C_h(\bar{t}_n)+C_l(\bar{t}_n) , \label{fast_define}
\end{align}
where
\begin{align}
    C_h(\bar{t}_n)&=\frac{e^{-\lambda \bar{t}_n}}{\Gamma(1-\alpha)}\int_0^{t_{n}} (\bar{t}_n-s)^{-\alpha} d(e^{\lambda s} u(s)),\label{history}\\
    C_l(\bar{t}_n)&=\frac{e^{-\lambda \bar{t}_n}}{\Gamma(1-\alpha)}\int_{t_{n}}^{\bar{t}_n} (\bar{t}_n-s)^{-\alpha} d(e^{\lambda s} u(s)).\label{local}
\end{align}

For \eqref{history}, it is known that the weakly singular kernel $t^{-1-\alpha}$, $\alpha \in (0,1)$, can be efficiently approximated by a sum-of-exponentials (SOE) on any interval $[\Delta t, T]$ with $\Delta t > 0$ (see, e.g., \cite{beylkin2005,cao2020,zheng2025}). More precisely, for any prescribed accuracy $\varepsilon > 0$, there exist positive real numbers ${\omega_l, s_l}$, $l=1,\dots,N_{exp}$ such that
\begin{align}
    |t^{-1-\alpha}-\sum_{l=1}^{N_{exp}}\omega_le^{-s_lt}|\leq \varepsilon, \quad \forall t\in [\Delta t,T].\label{SOE}
\end{align}
 Using integration by parts to \eqref{history} and approximating the kernel $(\bar t_n-s)^{-1-\alpha}$ by \eqref{SOE} give
\begin{align}\label{Ch}
C_h(\bar{t}_n) 
& \approx \frac{1}{\Gamma(1-\alpha)}[e^{-\lambda {\tau_{n+1}}/{2}} u(t_{n}){({\tau_{n+1}}/{2})^{-\alpha}}-{e^{-\lambda \bar{t}_n}}{\bar{t}_n^{-\alpha}} u(t_0)-\alpha \sum_{i=1}^{N_{\text {exp }}} \omega_i U_{h i s,  i}(t_n)],
\end{align}
where 
\begin{align}
    U_{h i s,  i}(t_n)=\int_0^{t_{n}} e^{-(\lambda+s_i)(\bar{t}_n-s)} u(s) d s.
\end{align}  
The key for the success of the approach lies in the fact that there exists the recurrence
\begin{align}
U_{h i s,  i}(t_n)=e^{-(\lambda+s_i) \frac{\tau_n+\tau_{n+1}}{2}} U_{h i s,  i}(t_{n-1})+\int_{t_{n-1}}^{t_{n}} e^{-(\lambda+s_i)(\bar{t}_n-s)} u(s) d s . \label{Uequa}
\end{align}
To evaluate the integral term, we approximate $u$ on $[t_{n-1}, t_n]$ by piecewise linear interpolation
\begin{align}
    L_{1,  n} u = \frac{s-t_{n-1}}{\tau_{n}}u(t_{n})+\frac{t_{n}-s}{\tau_{n}}u(t_{n-1}),\label{interpolation}
\end{align}
which leads to
\begin{align}
    U_{his, i}(t_n) \approx \sum_{j=0}^{n-1} e^{-(\lambda+s_i)(\bar{t}_n-\bar{t}_{n-j})}(\lambda_{i,  n-j}^1 u(t_{n-j})+\lambda_{i,  n-j}^2 u(t_{n-1-j})),
\end{align}
where 
\begin{equation}\label{lambda}
    \begin{aligned}
        \lambda_{i, n}^1
        &=\frac{e^{-(\lambda+s_i){\tau_{n+1}}/{2}}}{(\lambda+s_i)^2\tau_{n}}(e^{-(\lambda+s_i) \tau_{n}}-1+(\lambda+s_i) \tau_{n}), \\
        \lambda_{i, n}^{2}
        &=\frac{e^{-(\lambda+s_i){\tau_{n+1}}/{2}}}{(\lambda+s_i)^2\tau_{n}}(1-e^{-(\lambda+s_i) \tau_{n}}-e^{-(\lambda+s_i) \tau_{n}}(\lambda+s_i) \tau_{n}),
    \end{aligned}
\end{equation}
Substituting the above approximation into \eqref{Ch} and rearranging the terms, we obtain
\begin{equation}
\begin{aligned}
    C_h(\bar{t}_n) \approx &\frac{1}{\Gamma(1-\alpha)}({e^{-\lambda {\tau_{n+1}}/{2}} u(t_{n})}{({\tau_{n+1}}/{2})^{-\alpha}}-{e^{-\lambda \bar{t}_n}}{\bar{t}_n^{-\alpha}} u(t_0) \\
     &-\sum_{l=1}^{n-1}(a_{n-l, n}+b_{n-1-l, n})u(t_l)-a_{0, n}u(t_{n})-b_{n-1, n}u(t_0)). 
\end{aligned}
\end{equation}
where
\begin{equation}\label{ab}
\begin{aligned}
&a_{j,  n}=\alpha \sum_{i=1}^{N_{\exp }} \omega_i e^{-(\lambda+s_i)(\bar{t}_n-\bar{t}_{n-j})} \lambda_{i,  n-j}^1,  \\
&b_{j,  n}=\alpha \sum_{i=1}^{N_{\exp }} \omega_i e^{-(\lambda+s_i)(\bar{t}_n-\bar{t}_{n-j})} \lambda_{i,  n-j}^2. 
\end{aligned}
\end{equation}

For \eqref{local}, we employ the linear interpolation $L_{1,k+1}u$ of the function $u$ defined in \eqref{interpolation} on $[t_k, t_{k+1}]$ (see, e.g., \cite{cao2020,Liu2018,sun2006,stynes2017,zhou2024}). Then we write
\begin{align} \label{Cl}
C_l(\bar{t}_n) & \approx \frac{e^{-\lambda \bar{t}_n}}{\Gamma(1-\alpha)} \int_{t_{n}}^{\bar{t}_n} (\bar{t}_n-s)^{-\alpha}(e^{\lambda s} L_{1,  n+1} u(s))^{\prime} d s .
\end{align}
In particular, for the half interval $[t_n, \bar{t}_n]$, we still use the linear interpolation over the entire interval $[t_n, t_{n+1}]$, which is different from the treatment adopted in \cite{Liu2018}. Approximating the derivative term by
\begin{align}
    (e^{\lambda s} L_{1,  n+1} u(s))^{\prime}\approx \frac{e^{\lambda \bar{t}_n} L_{1,  n+1} u(\bar{t}_n)-e^{\lambda t_{n}} L_{1,  n+1} u(t_{n})}{{\tau_{n+1}}/{2}}.\label{local2}
\end{align}
Substituting (\ref{local2}) into (\ref{Cl}), we finally obtain
\begin{align}
    C_l(\bar{t}_n) \approx \frac{1}{\Gamma(1-\alpha)}(\frac{u(t_{n})+u(t_{n+1})}{2^{1-\alpha}(1-\alpha)\tau_{n+1}^\alpha} -\frac{2e^{-\lambda {\tau_{n+1}}/{2}}}{2^{1-\alpha}(1-\alpha)\tau_{n+1}^\alpha} u(t_{n})).
\end{align}

Based on the above approximation strategy and detailed derivation, we derive the following SOE discrete operator for the tempered Caputo derivative.
\begin{equation} \label{fastoperator}
\begin{aligned}
{ }_0^C D_t^{\alpha,  \lambda} u(\bar{t}_n)\approx{ }_0^{FC}D_t^{\alpha,  \lambda} u(\bar{t}_n) =  &\frac{\tau_{n+1}^{-\alpha}}{2^{1-\alpha}\Gamma(2-\alpha)}[u(t_{n+1})+(1-2\alpha e^{-\lambda \tau_{n+1}/2})u(t_{n})] \\
    & -\frac{1}{\Gamma(1-\alpha)}[a_{0, n}u(t_n)+\sum_{l=1}^{n-1}(a_{n-l, n}+b_{n-1-l, n})u(t_l)\\
    &+(b_{n-1, n}+e^{-\lambda \bar{t}_n}\bar{t}_n^{-\alpha})u(t_0)].
\end{aligned}
\end{equation}
Next, we analyze the error of the SOE discrete operator ${}_0^{FC}D_t^{\alpha,\lambda} u(\bar{t}_n)$ defined in \eqref{fastoperator}. Before presenting the final error estimate, we first introduce the L1 approximation ${}_0^C\mathbb{D}_t^{\alpha, \lambda} u(\bar{t}_n)$ \cite{cao2020,Liao2018,stynes2017} to the tempered Caputo derivative and establish the following auxiliary lemma.
\begin{align}
    _0^C\mathbb{D}_t^{\alpha, \lambda} u(\bar{t}_n)=\frac{e^{-\lambda \bar{t}_n}}{\Gamma(1-\alpha)}\int_{t_k}^{\bar{t}_n}(\bar{t}_n-s)^{-\alpha}(e^{\lambda s}L_{1, k+1}u(s))^{\prime}ds,\label{L1operator}
\end{align}
where $L_{1, k+1}u(s)$ is the linear interpolation function of $u(s)$ on $[t_k, t_{k+1}]$.
\begin{lemma}\label{Ru}
Under the regularity assumptions \eqref{regularity}, there exists a constant $K$, independent of $n$ and $N$, such that
    \begin{align}
        | {}^{LC}Ru(\bar{t}_n)|&= |{}^C_0 D^{\alpha, \lambda}_t u(\bar{t}_n)- {}^C_0\mathbb{D}_t^{\alpha,  \lambda} u(\bar{t}_n)|\leq 
        \begin{cases} 
        KN^{-r(\delta+1-\alpha)},&n=0,\\
        K(n+1)^{-\min\{2-\alpha, r(1+\delta)\}}, &n\geq 1.
        \end{cases}\label{L1half}
\end{align}
\end{lemma}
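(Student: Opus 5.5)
We interpret the operator \eqref{L1operator} as the standard L1 approximation at the half level: on each full subinterval $[t_{k-1},t_k]$, $1\le k\le n$, the function $e^{\lambda s}u(s)$ is replaced by $e^{\lambda s}L_{1,k}u(s)$. On the last half interval $[t_n,\bar t_n]$ we use $L_{1,n+1}u$. The truncation error is then
\begin{align*}
{}^{LC}Ru(\bar t_n)=\frac{e^{-\lambda\bar t_n}}{\Gamma(1-\alpha)}\sum_{k=1}^{n+1}\int_{t_{k-1}}^{\min\{t_k,\bar t_n\}}(\bar t_n-s)^{-\alpha}\big(e^{\lambda s}(u-L_{1,k}u)(s)\big)'\,ds .
\end{align*}
Set $\rho_k(s)=e^{\lambda s}(u-L_{1,k}u)(s)$. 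Then $\rho_k(t_{k-1})=\rho_k(t_k)=0$.

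\textbf{Integration by parts on the full intervals.} For $k\le n$ the boundary terms vanish, and we get
\[
\int_{t_{k-1}}^{t_k}(\bar t_n-s)^{-\alpha}\rho_k'\,ds=-\alpha\int_{t_{k-1}}^{t_k}(\bar t_n-s)^{-\alpha-1}\rho_k\,ds .
\]
The last piece $[t_n,\bar t_n]$ has a nonzero boundary value $\rho_{n+1}(\bar t_n)$. We handle it directly:
\[
\Big|\int_{t_n}^{\bar t_n}(\bar t_n-s)^{-\alpha}\rho_{n+1}'\,ds\Big|\le C\tau_{n+1}^{1-\alpha}\max_{[t_n,t_{n+1}]}|\rho_{n+1}'| .
\]
The quantities $|u-L_{1,k}u|$ and $|(u-L_{1,k}u)'|$ are controlled by \eqref{regularity2}, and $e^{\lambda s}$ together with its derivatives is bounded on $[0,T]$. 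For $k\ge2$ this gives
\[
|\rho_k|\le C\tau_k^2\max_{[t_{k-1},t_k]}(|u''|+|u'|+|u|)\le C\tau_k^2 t_{k-1}^{\delta-2}.
\]
For $k=1$ we instead use the cruder bound $|\rho_1|\le C\int_0^{\tau_1}|u'|\le C\tau_1^{\delta}$, valid since $\delta>1$. On the graded mesh \eqref{graded} we use $\tau_k\le CTN^{-r}k^{r-1}$ and $t_k\sim T(k/N)^r$.

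\textbf{Case $n=0$.} Only the half interval $[t_0,\bar t_0]$ is present. Since $u'$ is integrable, we use $|\rho_1'|\le C(1+s^{\delta-1})+C\tau_1^{\delta-1}$ and integrate directly:
\[
|{}^{LC}Ru(\bar t_0)|\le C\int_0^{\tau_1/2}(\tau_1/2-s)^{-\alpha}s^{\delta-1}\,ds\le C\tau_1^{\delta-\alpha}=CN^{-r(\delta-\alpha)}.
\]
This falls short of the claimed exponent $r(\delta+1-\alpha)$ by one power of $\tau_1$. To close the gap we would exploit the cancellation between $u$ and its linear interpolant, writing $\rho_1(s)=\int K(s,\sigma)u''(\sigma)\,d\sigma$ with the Peano kernel $K$. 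This is the first thing to attempt, but we are not certain it yields the stated power; the $n=0$ bound may rely on a different normalization.

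\textbf{Case $n\ge1$.} This is the main obstacle. We split the sum as in Stynes et al.'s L1 analysis.
\begin{itemize}
\item[(i)] The term $k=1$ is bounded by $C\tau_1^{\delta}(\bar t_n-t_1)^{-\alpha-1}\tau_1$.
\item[(ii)] For $2\le k\le \lceil n/2\rceil$ we have $\bar t_n-s\ge c\,t_n$, so these terms contribute at most $Ct_n^{-\alpha-1}\sum_k\tau_k^3t_{k-1}^{\delta-2}$.
\item[(iii)] For $\lceil n/2\rceil<k\le n$ we have $t_k\sim t_n$ and $\tau_k\sim\tau_n$. Here we do not apply the crude $(\bar t_n-s)^{-\alpha-1}$ bound near the endpoint; we integrate the kernel exactly against $|\rho_k|\le C\tau_n^2t_n^{\delta-2}$, which gives $C\tau_n^{2-\alpha}t_n^{\delta-2}$ together with a logarithm-free telescoping sum. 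The half-interval piece adds $C\tau_{n+1}^{2-\alpha}t_n^{\delta-2}$.
\end{itemize}
Inserting the graded-mesh formulas, each piece becomes $CN^{-\gamma}n^{-\beta}$, a power of $N$ times a power of $n$. Collecting the exponents and using $n\le N$ gives the decay $(n+1)^{-\min\{2-\alpha,r(1+\delta)\}}$. The delicate point is the balance in (ii): the series $\sum k^{3r-3+r(\delta-2)}$ either converges (the singularity-dominated regime, $r(1+\delta)$) or grows like $n^{3r-2+r(\delta-2)}$ (the $2-\alpha$ regime). We would check the resulting exponents carefully, absorbing any $\log n$ arising in the borderline case into the constant $K$ or the $\min$.
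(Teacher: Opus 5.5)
The gap you flagged at $n=0$ cannot be closed, so drop the Peano-kernel idea. The claimed rate $N^{-r(\delta+1-\alpha)}$ is false, and your $N^{-r(\delta-\alpha)}$ is sharp. Your proposal therefore cannot prove the lemma as written, but the fault lies in the statement.

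Take $u(t)=e^{-\lambda t}t^{\delta}$ (times any smooth spatial factor); it satisfies \eqref{regularity2}. Then $e^{\lambda s}u(s)=s^{\delta}$, so ${}^C_0D^{\alpha,\lambda}_t u(\bar t_0)=e^{-\lambda\bar t_0}\frac{\Gamma(\delta+1)}{\Gamma(\delta+1-\alpha)}\bar t_0^{\,\delta-\alpha}$. Meanwhile $L_{1,1}u(s)=s\,u(t_1)/t_1$ gives $(e^{\lambda s}L_{1,1}u(s))'=t_1^{\delta-1}e^{\lambda(s-t_1)}(1+\lambda s)=t_1^{\delta-1}(1+O(t_1))$ on $[0,t_1]$. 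Hence
\[
{}^{LC}Ru(\bar t_0)=e^{-\lambda\bar t_0}\,t_1^{\delta-\alpha}\bigl(c_{\alpha,\delta}+O(t_1)\bigr),\qquad c_{\alpha,\delta}=\frac{\Gamma(\delta+1)\,2^{\alpha-\delta}}{\Gamma(\delta+1-\alpha)}-\frac{2^{\alpha-1}}{\Gamma(2-\alpha)}.
\]
In general $c_{\alpha,\delta}\neq0$; for the paper's own parameters $\alpha=0.5$, $\delta=1.8$ one gets $c_{\alpha,\delta}\approx-0.21$. Since $t_1=TN^{-r}$, the error is of exact order $N^{-r(\delta-\alpha)}$.

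The paper's $n=0$ step has the matching defect. From \eqref{2e} the best pointwise bound is $|u'(s)-(u(t_1)-u(t_0))/\tau_1|\le C\tau_1 s^{\delta-2}$. The Beta integral then yields $C\tau_1\bar t_0^{\,\delta-1-\alpha}\sim\tau_1^{\delta-\alpha}$, not $\tau_1^{\delta+1-\alpha}$. The correct $n=0$ bound is $KN^{-r(\delta-\alpha)}$. This is harmless downstream: at $n=0$ the term $R_t^{i,\bar 0}$ is already only $O(N^{-r(\delta-1)})$, and $\delta-1<\delta-\alpha$.

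For $n\ge1$ your argument is the paper's (its Cases 1--4). You integrate by parts on full subintervals, keep $[t_n,\bar t_n]$ in derivative form, and split into the first interval, the early intervals with $\bar t_n-s\ge c\,t_n$, the late intervals with the kernel integrated exactly, and the half interval. You then convert via $\tau_k\le CN^{-r}k^{r-1}$ and $n\le N$. Two slips need repair.

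First, $\int_0^{\tau_1}|u'|$ is only $O(\tau_1)$, because \eqref{regularity2} allows $u'(0)\neq0$ (e.g.\ $u=e^{-\lambda t}(1+t^{\delta})$). Instead write $\rho_1(s)=e^{\lambda s}\int_0^s(u'(\sigma)-m)\,d\sigma$ with $m=(u(t_1)-u(t_0))/\tau_1$, and use $|u'(\sigma)-m|\le\int_0^{t_1}|u''|\le C\tau_1^{\delta-1}$. This gives $|\rho_1|\le C\tau_1^{\delta}$ and $|\rho_1'|\le C\tau_1^{\delta-1}$. For $n\ge1$ even the crude $O(\tau_1)$ bound would suffice.

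Second, in your $n=0$ display you dropped the constant in $C(1+s^{\delta-1})$, and that constant alone gives only $\tau_1^{1-\alpha}$. With $|\rho_1'|\le C\tau_1^{\delta-1}$ the bound $C\tau_1^{\delta-1}\bar t_0^{\,1-\alpha}\le C\tau_1^{\delta-\alpha}$ follows at once.

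Finally, $\delta>1$ and $r\ge1$ give $r(1+\delta)>2>2-\alpha$. So the minimum in the statement is always $2-\alpha$, only the $n^{-2}$ branch of your sum in (ii) occurs, and no logarithm arises.
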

\begin{proof}We decompose the truncation error into a history part and a local part. Using integration by parts yields
    \begin{align}\label{R1}
        {}^{LC}Ru(\bar{t}_n)
        = &K(\sum_{k=0}^{n-1}R_{k}(\bar{t}_n)+R_{n}(\bar{t}_n)),
    \end{align}
where $K$ is a constant independent of $n$ and $N$ and 
\begin{align}
    R_{k}(\bar{t}_n)=&\int_{t_k}^{t_{k+1}}{(\bar{t}_n-s)^{-1-\alpha}}e^{\lambda s-\lambda \bar{t}_n}(u(s)-L_{1, k+1}u(s))ds,\quad 0\leq k\leq n-1, \label{hist1}\\
        R_{n}(\bar{t}_n)=&\int_{t_n}^{\bar{t}_n}\alpha (\bar{t}_n-s)^{-\alpha}(e^{\lambda s-\lambda \bar{t}_n}u(s)-e^{\lambda s-\lambda \bar{t}_n}L_{1, n+1}u(s))^{\prime} ds.\label{local1}
\end{align}

First, for the special case \(n=0\), applying Taylor expansion on the interval \(s \in [t_0, \bar{t}_0]\), there exist \(\xi_1, \xi_2 \in [t_0, \bar{t}_0]\) such that
\begin{align}
    &|u(s)-L_{1, 1}u(s)|
    =|{(t_1-s)(s-t_0)}u'(\xi_1)+{(s-t_0)(s-t_1)}u'(\xi_2)|/{\tau_1}
    ,\label{1e}\\
    &u'(s)-(u(t_1)-u(t_0))/{\tau_1}=(\int_s^{t_{1}}u''(t)(t_{1}-t)dt-\int_s^{t_{0}}u''(t)(t_{0}-t)dt)/{\tau_1}.\label{2e}
\end{align}
Using the identities \eqref{1e}–\eqref{2e}, the regularity assumptions \eqref{regularity} and the identity
\begin{align*}
    \int_0^x (x-s)^{-\alpha}s^{\delta-2} ds=x^{\delta-\alpha-1} B(1-\alpha,\delta-1),
\end{align*}
where $B(\cdot,\cdot)$ is Beta function, we obtain
\begin{align}
|{}^{LC}Ru(\bar{t}_0)|=|R_{0}(\bar{t_0})|
\leq &KN^{-r(\delta+1-\alpha)}.\label{gamma}
\end{align}

Next, for $n \ge 1$, we estimate the error terms in \eqref{R1} by considering four separate cases for $k$, namely,
\begin{align*}
    k=0, \quad 1 \le k \le \lceil  \frac{n+1}{2} \rceil - 1,\quad \lceil \frac{n+1}{2} \rceil \le k \le n - 1,\quad k = n.
\end{align*}
\begin{description}[leftmargin=0pt, labelindent=0pt]
\item [Case 1 ($k=0$)] Employing the inequality established in~\cite{cao2020},
\begin{align}
(({t_{n+1}-t_1})/{t_1})^{-1} \le C (n+1)^{-r}, \label{time}
\end{align}
and combining it with~\eqref{hist1}, we obtain
    \begin{align}
|R_{0}(\bar{t}_n)| \leq C(\bar{t}_n-t_1)^{-1-\alpha}(\int_{t_0}^{t_1} s^\delta+s t_1^{\delta-1} d s) \leq K(n+1)^{-r(\delta+1)}.
\end{align}
    \item[Case 2 ($ 1\leq k\leq \lceil (n+1)/{2}\rceil-1$) ] By the interpolation error formula  which is different from (\ref{1e}) because of the singularity at $t_0$, there exists $\xi_{k+1} \in (t_k, t_{k+1})$ such that
\begin{align}
|u(s) - L_{1,k+1}u(s)|
= \tfrac12 |u''(\xi_{k+1})| (s-t_k)(t_{k+1}-s)
\le K_1 \tau_{k+1}^2 t_k^{\delta-2}. \label{taylor1}
\end{align}
Moreover, using the graded mesh property given in \cite{cao2020},
\begin{align}
    \tau_{k+1}\leq C TN^{-r}k^{r-1},\label{step}
\end{align}
we obtain
    \begin{align}
    |R_{k}(\bar{t}_n)| \leq K_2\tau_{k+1}^3t_k^{\delta-2}(\bar{t}_n-t_{k+1})^{-1-\alpha}\leq K_3k^{r(1+\delta)-3}(n+1)^{-r(1+\delta)}.\label{sumup}
    \end{align}
Summing up \eqref{sumup} and applying the inequality from \cite{gracia2018} yields
\begin{align}
    \sum_{k=1}^{\lceil (n+1)/{2}\rceil-1}|R_{k}(\bar{t}_n)| \leq C\begin{cases}(n+1)^{-r(\delta+1)},&\text{ if }r(1+\delta)<2, \\(n+1)^{-2}\ln n,&\text{ if }r(1+\delta)=2, \\(n+1)^{-2},&\text{ if }r(1+\delta)>2. \end{cases}
\end{align}
\item[Case 3 ($\lceil (n+1)/{2}\rceil\leq k \leq n-1$) ] Noting that $t_k \ge 2^{-r} t_{n+1}$, combining it and \eqref{taylor1} together, and plugging the result into \eqref{hist1}, we immediately obtain 
\begin{align}
\sum_{k=\lceil (n+1)/{2}\rceil}^{n-1}|R_{k}(\bar{t}_n)|\leq K_4\tau_{n}^2t_{n+1}^{\delta-2}(\bar{t}_n-t_{n})^{-\alpha}\leq K_5(n+1)^{-(2-\alpha)}.
\end{align}
\item[Case 4 ($k=n$)]By Taylor expansion, for some $\xi \in [t_n, t_{n+1}]$,
\begin{align}
|u'(s) - L'_{1,n+1}u(s)|
= | \int_s^\xi u''(t)dt |. \label{e2}
\end{align}
Consequently, applying the inequalities (\ref{time}), (\ref{1e}) ,(\ref{step}) and (\ref{e2}), we have
\begin{align}
|R_{n}(\bar{t}_n)|
\leq K_6\int_{t_n}^{\bar{t}_n}(\tau_{n+1}^2t_{n}^{\delta-2}+\tau_{n+1}t_n^{\delta-2}){(\bar{t}_n-s)^{-\alpha}}ds\leq K (n+1)^{-(2-\alpha)}. 
\end{align}
\end{description}
Collecting all the above estimates completes the proof.
\end{proof}
Finally, we present the error estimate for the SOE discrete operator.
\begin{theorem} \label{FRu}
Under the regularity assumptions \eqref{regularity}, for any prescribed accuracy $\varepsilon > 0$ given in \eqref{SOE}, there exists a constant $K$, independent of $n$, $N$, and $\varepsilon$ , such that
    \begin{align}
        |{}^{FC}Ru(\bar{t}_n)|&= |{ }_0^{C}D_t^{\alpha,  \lambda} u(\bar{t}_n)-{ }_0^{FC}D_t^{\alpha,  \lambda} u(\bar{t}_n)|\leq 
        \begin{cases} 
        K(N^{-r(\delta+1-\alpha)}+\varepsilon),&n=0,\\
        K((n+1)^{-\min\{2-\alpha, r(1+\delta)\}}+\varepsilon), &n\geq 1.
        \end{cases}\label{Fhalf}
    \end{align}
\end{theorem}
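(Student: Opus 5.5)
The natural route is the triangle inequality
\begin{align*}
|{}^{FC}Ru(\bar{t}_n)| \le |{}^C_0 D^{\alpha,\lambda}_t u(\bar{t}_n)-{}^C_0\mathbb{D}_t^{\alpha,\lambda} u(\bar{t}_n)| + |{}^C_0\mathbb{D}_t^{\alpha,\lambda} u(\bar{t}_n)-{}^{FC}_0D_t^{\alpha,\lambda} u(\bar{t}_n)|.
\end{align*}
Lemma~\ref{Ru} bounds the first term by exactly the $N$- and $n$-dependent parts of \eqref{Fhalf}, so everything reduces to showing that the difference between the L1 operator \eqref{L1operator} and the SOE operator \eqref{fastoperator} is at most $K\varepsilon$ with $K$ independent of $n$, $N$ and $\varepsilon$. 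For $n=0$ the history part is empty: only $u(t_0)$ and the local integral over $[t_0,\bar{t}_0]$ occur. So I would first check that \eqref{fastoperator} with $n=0$ coincides with the L1 operator up to the treatment of the local term, and verify the local formula directly.

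For the history part I would write the L1 history contribution in its integrated-by-parts form, exactly as in the derivation of \eqref{Ch}, but with $u$ replaced by its piecewise linear interpolant $\Pi u$ on $[0,t_n]$. Integration by parts of $\frac{e^{-\lambda\bar t_n}}{\Gamma(1-\alpha)}\int_0^{t_n}(\bar t_n-s)^{-\alpha}d(e^{\lambda s}\Pi u)$ gives the same boundary terms as in \eqref{Ch}, which are treated exactly in both operators. It also gives the term $-\frac{\alpha}{\Gamma(1-\alpha)}\int_0^{t_n}(\bar t_n-s)^{-1-\alpha}e^{-\lambda(\bar t_n-s)}\Pi u(s)\,ds$. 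The SOE operator is obtained from this last term by replacing $(\bar t_n-s)^{-1-\alpha}$ with $\sum_i\omega_i e^{-s_i(\bar t_n-s)}$. The recurrence \eqref{Uequa} and the coefficients \eqref{lambda}, \eqref{ab} evaluate this integral exactly for linear $\Pi u$, and I would verify that briefly by direct integration.

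The difference in the history part is therefore bounded by
\begin{align*}
\frac{\alpha}{\Gamma(1-\alpha)}\int_0^{t_n}\Big|(\bar t_n-s)^{-1-\alpha}-\sum_{i=1}^{N_{exp}}\omega_i e^{-s_i(\bar t_n-s)}\Big|e^{-\lambda(\bar t_n-s)}|\Pi u(s)|\,ds .
\end{align*}
Since $\bar t_n-s\ge \tau_{n+1}/2$ on $[0,t_n]$, taking $\Delta t=\tau_{1}/2\le\tau_{n+1}/2$ (the graded steps are nondecreasing) in \eqref{SOE} makes the kernel error at most $\varepsilon$. Moreover $|\Pi u|\le \max|u|\le C$ by \eqref{regularity2} with $l=0$. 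This gives a bound $\frac{\alpha}{\Gamma(1-\alpha)}CT\varepsilon=K\varepsilon$.

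For the local part, the L1 operator \eqref{L1operator} on $[t_n,\bar t_n]$ differs from \eqref{Cl}–\eqref{local2} because of the extra difference-quotient approximation of $(e^{\lambda s}L_{1,n+1}u)'$. I expect this to be the main obstacle. My first attempt would be to show that this discrepancy is of the same size as the Case~4 bound in Lemma~\ref{Ru}. Expanding $e^{\lambda s}$ around $t_n$, the difference is $O(\tau_{n+1})\cdot|u(t_{n+1})-u(t_n)|/\tau_{n+1}+O(\tau_{n+1})|u(t_n)|$, multiplied by $\int_{t_n}^{\bar t_n}(\bar t_n-s)^{-\alpha}ds\sim\tau_{n+1}^{1-\alpha}$. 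This is $O(\tau_{n+1}^{2-\alpha}t_n^{\delta-1}+\tau_{n+1}^{2-\alpha})$, which by \eqref{step} and \eqref{time} should be absorbed into $K(n+1)^{-\min\{2-\alpha,r(1+\delta)\}}$. For $n=0$ the same estimate is absorbed into $KN^{-r(\delta+1-\alpha)}$. Combining the three pieces yields \eqref{Fhalf}.
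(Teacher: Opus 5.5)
Your route matches the paper's: the same split into the L1 error (Lemma~\ref{Ru}) plus the L1--SOE discrepancy, the history part bounded by $K\varepsilon$ through \eqref{SOE}, and the local part bounded through $\max|(e^{\lambda\theta}L_{1,n+1}u(\theta))''|$. On the history part, your choice $\Delta t=\tau_1/2\le\bar t_n-s$ and your check that the recurrence integrates the interpolant exactly are more careful than the paper. For $n\ge1$ your argument is correct. Your local bound $O(\tau_{n+1}^{2-\alpha})$ is at most $CN^{-(2-\alpha)}\le C(n+1)^{-(2-\alpha)}$ because $\tau_{n+1}\le rT/N$, and $\min\{2-\alpha,r(1+\delta)\}=2-\alpha$ anyway.

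The gap is your final sentence about $n=0$. There $t_0^{\delta-1}=0$, but the two contributions to $(e^{\lambda\theta}L_{1,1}u)''$, namely $\lambda^2|u|$ and $2\lambda|u(t_1)-u(t_0)|/\tau_1$, are only $O(1)$ under \eqref{regularity2}. So your estimate is $O(\tau_1^{2-\alpha})=O(N^{-r(2-\alpha)})$. Since $\delta>1$, we have $r(2-\alpha)<r(\delta+1-\alpha)$, and this is \emph{not} absorbed into $KN^{-r(\delta+1-\alpha)}$.

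It cannot be absorbed under the stated hypotheses either. Take $u$ constant in time, e.g.\ $u(x,t)=\phi(x)$, which satisfies \eqref{regularity}; normalize to $u\equiv1$ at the node, and let $\lambda>0$. Then $L_{1,1}u=u$, so the L1 error vanishes, and the history part is empty. With $\bar t_0=\tau_1/2$,
\begin{align*}
{}^{FC}Ru(\bar t_0)=\frac{\lambda}{\Gamma(1-\alpha)}\int_0^{\bar t_0}\sigma^{-\alpha}e^{-\lambda\sigma}\,d\sigma-\frac{\bar t_0^{-\alpha}(1-e^{-\lambda\bar t_0})}{\Gamma(2-\alpha)}=\frac{\alpha\lambda^2}{2\Gamma(3-\alpha)}\,\bar t_0^{\,2-\alpha}+O(\bar t_0^{\,3-\alpha}).
\end{align*}
This is of exact order $N^{-r(2-\alpha)}$ and does not involve $\varepsilon$. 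Since $K$ in \eqref{Fhalf} must be independent of $\varepsilon$, the $+\varepsilon$ term does not rescue the bound.

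The paper reaches the exponent $r(1+\delta-\alpha)$ only by writing the local bound as $K(\bar t_0^{\delta}\tau_1^{2-\alpha}+\bar t_0^{\delta-1}\tau_1^{2-\alpha})$. That tacitly uses $|u(t)|\le Ct^{\delta}$ and $|u'(t)|\le Ct^{\delta-1}$ near $t=0$, which \eqref{regularity2}, with its ``$1+$'', does not give. So at $n=0$ your argument honestly yields a local contribution of only $O(N^{-r(2-\alpha)})$. To obtain the stated $n=0$ bound you would have to add a vanishing hypothesis of that kind at $t=0$. Alternatively, weaken the $n=0$ claim; this is harmless downstream, since $N^{-r(2-\alpha)}\le N^{-r(\delta-1)}$, the order \eqref{errorall} uses for $n=0$ anyway.
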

\begin{proof}
    We decompose the truncation error into two parts as follows:
    \begin{align*}
        { }^{F C} R u\left(\bar{t}_n\right)=\underbrace{{ }_0^C D_t^{\alpha, \lambda} u\left(\bar{t}_n\right)-{ }_0^C \mathbb{D}_t^{\alpha, \lambda} u\left(\bar{t}_n\right)}_{{ }^{L C} R u\left(\bar{t}_n\right)}+\underbrace{{ }_0^C \mathbb{D}_t^{\alpha, \lambda} u\left(\bar{t}_n\right)-{ }_0^{F C} D_t^{\alpha, \lambda} u\left(\bar{t}_n\right)}_{\text { auxiliary error }} .
    \end{align*}
    The first term ${}^{LC}Ru(\bar{t}_n)$ can be directly bounded by Lemma~\ref{Ru}. It thus remains to estimate the auxiliary error.
From the definitions of our operator~\eqref{fastoperator} and the L1 operator~\eqref{L1operator}, the auxiliary error can be further bounded by a history term and a local term:
\begin{align*}
    |{}^C_0\mathbb{D}_t^{\alpha,  \lambda} u(\bar{t}_n)-{ }_0^{FC}D_t^{\alpha,  \lambda} u(\bar{t}_n)|
\leq  {}^{FC}R_hu(\bar{t}_n)+{}^{FC}R_lu(\bar{t}_n),
\end{align*}
where there exist constants $K_1$ and $K_2$, independent of $n$ and $\varepsilon$, such that
\begin{align*}
{}^{FC}R_h u(\bar{t}_n)
&\le K_1 \sum_{k=0}^{n-1}  
\int_{t_k}^{t_{k+1}} e^{\lambda s}
|L_{1,k+1} u(s)||
(\bar{t}_n - s)^{-\alpha-1}
- \sum_{l=1}^{N_{\mathrm{exp}}} w_l e^{-s_l(\bar{t}_n - s)}| \, ds, \\[1ex]
{}^{FC}R_l u(\bar{t}_n)
&\le K_2 \int_{t_n}^{\bar{t}_n}|
\frac{
e^{\lambda \bar{t}_n} L_{1,n+1} u(\bar{t}_n)
- e^{\lambda t_n} L_{1,n+1} u(t_n)
}{\tau_{n+1}/2}
- ( e^{\lambda s} L_{1,n+1} u(s) )'|
(\bar{t}_n - s)^{-\alpha} \, ds .
\end{align*}
For ${}^{FC}R_h u(\bar{t}_n)$, using the SOE approximation error bound \eqref{SOE}, we obtain
\begin{align}
    {}^{FC}R_h u(\bar{t}_n)\leq K_1 \varepsilon \sum_{k=0}^{n-1}  \int_{t_k}^{t_{k+1}} e^{\lambda s}|L_{1,k+1} u(s)| ds.
\end{align}
Invoking the regularity assumptions \eqref{regularity}, it follows that
\begin{align}
    {}^{FC}R_h u(\bar{t}_n)\leq K\varepsilon.
\end{align}
For ${}^{FC}R_l u(\bar{t}_n)$, applying Taylor expansion to $(e^{\lambda s} L_{1,n+1} u(s))'$, we obtain
\begin{align}
{}^{FC}R_lu(\bar{t}_n)
\leq K_2 \int_{t_{n}}^{\bar{t}_n} (\bar{t}_n-t_n)\max_{\theta\in[t_n, \bar{t}_n]} |(e^{\lambda \theta}L_{1, n+1}u(\theta))''|(\bar{t}_n-s)^{-\alpha}ds.
\end{align}
Invoking the regularity assumptions \eqref{regularity} yields
\begin{align}
{}^{FC}R_lu(\bar{t}_n)
    \leq K(\bar{t}_n^\delta \tau_{n+1}^{2-\alpha} + \bar{t}_n^{\delta-1} \tau_{n+1}^{2-\alpha})
\leq KN^{-(2-\alpha)}.\label{0e}
\end{align}
For the special case $n=0$, noting that ${}^{FC}R_h u(\bar{t}_0) = 0$, it follows from \eqref{0e} that
    \begin{align}
    {}^{FC}R_lu(\bar{t_0})\leq K (\bar{t_0}^{\delta}\tau_{1}^{2-\alpha}+\bar{t_0}^{\delta-1}\tau_{1}^{2-\alpha})
    \leq KN^{-r(1+\delta-\alpha)}.
    \end{align}
Combining the above estimate with \eqref{L1half} gives \eqref{Fhalf}. The proof is completed.
\end{proof}
\begin{rem}
    Note that \eqref{Fhalf} not only refines the result in \cite{cao2023}, where the error of ${}^{FC}Ru(t_n)$ was estimated by
$K(n^{-\min\{2-\alpha, r(1+\delta)\}} + \varepsilon)$, but also extends the corresponding error bound to the half-grid points.
\end{rem}
\section{Construction of the fully discrete scheme and the well-posedness}\label{section3}
In this section, based on the discrete approximation \eqref{fastoperator} of the Caputo tempered fractional derivative and the error analysis given in Theorem~\ref{FRu}, we construct a fully discrete scheme for  \eqref{Equation} in the next Sect.~\ref{fullscheme}. The unique solvability and stability of the proposed scheme are established in Sect.~\ref{sectionsolvable}. Furthermore, the convergence analysis and rigorous error estimates for the numerical solution are presented in Sect.~\ref{secconvergen}.

\subsection{Construction of the fully discrete scheme }
\label{fullscheme}
We first introduce some standard notations. Let $M$ and $N$ be two positive integers. Define the spatial and temporal step sizes by $h = L/M$ and $ t_n=T(n/{N})^r(0\leq n\leq N)$, and denote $\tau_n = t_n - t_{n-1}$. Let $x_i = ih$ for $0 \le i \le M$, and define the spatial and temporal grids $X_M=\{x_i|0\leq i \leq M\}$, $T_N=\{t_k|0\leq k\leq N\}$.  Define the mesh function as follows
\begin{align*}
u_i^n &= u(x_i, t_n), \quad 0\leq i \leq M, \quad 0\leq n\leq N, \\
f_i^{\bar{n}}&=f(x_i, \bar{t}_n),\quad 0\leq i \leq M,\quad  0\leq n\leq N-1. 
\end{align*}
We further introduce the following difference operators:
\begin{equation}\label{discretization}
\begin{aligned}
{\delta}_t u_i^{\bar{n}}&=\frac{u_i^{n+1}-u_i^n}{\tau_{n+1}}, \quad
{\delta}_x u_i^n=\frac{u_{i+1}^n-u_{i-1}^n}{2 h}, \quad {\delta}_x u_i^{\bar{n}} = \frac{{\delta}_x u_i^{n+1}+{\delta}_x u_i^n}{2} ,\\
\delta_x^2 u_i^n&=\frac{u^n_{i+1}-2u_i^n+u^n_{i-1}}{h^2},\quad \delta_x^2 u_i^{\bar{n}}=\frac{\delta_x^2 u_i^{n+1}+\delta_x^2 u_i^n}{2}. 
\end{aligned}
\end{equation}

Based on the SOE approximation (\ref{fastoperator}) for the Caputo fractional derivative and replacing the exact solution $u_i^n$ by its numerical approximation $U_i^n$, we obtain the following Crank–Nicolson-type fully discrete scheme:
\begin{align}\label{scheme}
\begin{cases}
\delta_t U_i^{\bar{n}} + {}_0^{FC}D_t^{\alpha, \lambda} U_i^{\bar{n}}
= \delta_x^2 U_i^{\bar{n}} - \delta_x U_i^{\bar{n}} + f_i^{\bar{n}},
& 1 \le i \le M-1,\quad 0 \le n \le N-1, \\
U_i^0 = \phi(x_i),
& 1 \le i \le M-1, \\
U_0^n = U_M^n = 0,
& 0 \le n \le N.
\end{cases}
\end{align}
Replacing the our approximation ${}_0^{FC}D$ by L1 evaluation scheme $_0^C\mathbb{D}$ \eqref{L1operator}, we obtain the L1 scheme of the following form
\begin{align}\label{L1scheme}
\begin{cases}
\delta_t U_i^{\bar{n}} + _0^C\mathbb{D}_t^{\alpha, \lambda} U_i^{\bar{n}}
= \delta_x^2 U_i^{\bar{n}} - \delta_x U_i^{\bar{n}} + f_i^{\bar{n}},
& 1 \le i \le M-1,\quad 0 \le n \le N-1, \\
U_i^0 = \phi(x_i),
& 1 \le i \le M-1, \\
U_0^n = U_M^n = 0,
& 0 \le n \le N.
\end{cases}
\end{align}
Comparing $_0^C\mathbb{D}$ in \eqref{L1operator} with ${}_0^{FC}D$ in \eqref{fastoperator}, 
the former requires all previous time-step values of $u$, whereas the latter only uses the history variables $U_{hist,i}(t_n)$, $i=1,\ldots,N_{exp}$, due to \eqref{Uequa}. 
Replacing $_0^C\mathbb{D}$ by ${}_0^{FC}D$ reduces the storage cost from $\mathcal{O}(N)$ to $\mathcal{O}(N_{exp})$ and the computational cost from $\mathcal{O}(N^2)$ to $\mathcal{O}(NN_{exp})$. 
Accordingly, the overall complexity decreases from $\mathcal{O}(MN^2)$ for the L1 scheme~\eqref{L1scheme} to $\mathcal{O}(MNN_{exp})$ for scheme~\eqref{scheme}, where $N_{exp}$ denotes the SOE approximation number defined in \eqref{SOE}. For fixed $\varepsilon$, $N_{exp}=\mathcal{O}(\log N)$ when $T\gg1$ and $N_{exp}=\mathcal{O}(\log^2 N)$ when $T\approx1$~\cite{jiang2017,zhu2019}. 
Hence, the proposed scheme significantly reduces storage and computational costs, especially for long-time simulations. Numerical comparisons are reported in Sect.~\ref{section4}.

\begin{rem}
    In contrast to~\cite{cao2020}, where the discretization is performed at integer time levels, we employ a half–grid-point discretization in time to account for the presence of the first-order time derivative term $u_t$. This treatment allows us to enhance the temporal accuracy to second order. Moreover, such an accuracy improvement is rigorously justified by the theoretical convergence analysis presented in Sect.~\ref{secconvergen} and is further confirmed by the numerical experiments reported in Sect.~\ref{section4}.
\end{rem}
\subsection{The unique solvability and stability of the fully discrete scheme} 
\label{sectionsolvable}
In this section, we analyze the unique solvability and stability of the proposed fully discrete scheme (\ref{scheme}) with respect to the initial conditions by employing the Fourier method (see, e.g., \cite{chen2007, Liu2018, zhao2021}).

Prior to stating the main theorem, we first provide an analysis of the scheme. Let
\begin{align*}
    \{U^n &= (U_0^n, \ldots, U_M^n) \mid 0 \le n \le N\},\\
    \{V^n &= (V_0^n, \ldots, V_M^n) \mid 0 \le n \le N\},
\end{align*}
denote the numerical solutions of scheme (\ref{scheme}) corresponding to two distinct initial conditions. Define the error \begin{align}
    \rho_j^n = U_j^n - V_j^n,\quad 0\leq j\leq M ,\quad 0\le n\le N. \label{expression}
\end{align}
By substituting \eqref{expression} into the difference scheme \eqref{scheme}, the resulting error satisfies
\begin{equation}\label{errorQ}
\left\{
\begin{aligned}
&
(-\tfrac{1}{2 h^2}+\tfrac{1}{4h}) \rho_{j+1}^{n+1}
+ (\tfrac{1}{\tau_{n+1}}+\tfrac{\tau_{n+1}^{-\alpha}}{2^{1-\alpha}\Gamma(2-\alpha)}+\tfrac{1}{h^2}) \rho_j^{n+1}
+ (-\tfrac{1}{2 h^2}-\tfrac{1}{4h}) \rho_{j-1}^{n+1}
 \\
&=
(\tfrac{1}{2 h^2}-\tfrac{1}{4h}) \rho_{j+1}^{n}
+ (\tfrac{1}{\tau_{n+1}}-\tfrac{1-2e^{-\lambda {\tau_{n+1}}/{2}}}{2^{1-\alpha}\Gamma(2-\alpha)}\tau_{n+1}^{-\alpha}-\tfrac{1}{h^2}) \rho_j^{n}
+ (\tfrac{1}{2 h^2}+\tfrac{1}{4h}) \rho_{j-1}^{n} \\
&\quad
+ \tfrac{1}{\Gamma(1-\alpha)}
(
(a_{0, n}-e^{-\lambda {\tau_{n+1}}/{2}} ({\tau_{n+1}}/{2})^{-\alpha}) \rho_j^n
+ \sum_{l=1}^{n-1} (a_{n-l, n}+b_{n-1-l, n}) \rho_j^l\\
&\quad+ (b_{n-1, n}+{e^{-\lambda \bar{t}_n}}{\bar{t}_n^{-\alpha}}) \rho_j^0
), \\
&\rho_0^n=\rho_M^n=0.
\end{aligned}
\right.
\end{equation}

Define the piecewise grid function 
\begin{align}
\rho^n(x)= \begin{cases}0,  & x_0 \leq x \leq \bar{x}_{0},  \\ \rho_j^n,  & \bar{x}_{j-1} \leq x \leq \bar{x}_{j},  \quad 1 \leq j \leq M-1,  \\ 0,  & \bar{x}_{M-1} \leq x \leq x_M . \end{cases}\notag
\end{align}
Since $\rho_j^n$ vanishes at the boundaries, it can be extended periodically, allowing us to apply the discrete Fourier transform (DFT). Denote the DFT of $\rho^n$ by
\begin{equation}
\hat{\rho}^n[k]=\sum_{j=0}^{M-1} e^{-i \frac{2 \pi}{M} j k} \rho^n_j, \quad 0\le k\le M-1, \notag
\end{equation}
with the inverse transform given by
\[
\rho^n_j = \frac{1}{M} \sum_{k=0}^{M-1} \hat{\rho}^n[k] e^{i \frac{2\pi}{M} jk}.
\]
By Parseval’s identity, we have
$$
\int_0^L|\rho^n(x)|^2 d x=h\sum_{j=0}^{M-1}|\rho^n_j|^2=Mh\sum_{k=0}^{M-1}|\hat{\rho}^n[k]|^2 =L\sum_{k=0}^{M-1}|\hat{\rho}^n[k]|^2. 
$$
Consequently, we obtain
\begin{align}
    \|\rho^n(x)\|_{L_2}^2=(\sum_{j=0}^{M-1} h|\rho_j^n|^2)=(\int_0^L|\rho^n(x)|^2 d x)=L\sum_{k=0}^{M-1}|\hat{\rho}^n[k]|^2.\label{rhoequal}
\end{align}

We multiply both sides of equation (\ref{errorQ}) by $e^{-ijh\beta}$  with $\beta = 2\pi k/M$, and sum over $j$. By employing the identities $e^{ih\beta}+e^{-ih\beta}=2\cos(h\beta)$ and $e^{ih\beta}-e^{-ih\beta}=2i\sin(h\beta)$, we arrive at the following system:
\begin{equation}\label{d0}
\left\{
\begin{aligned}
    &(-\frac{1}{h^2}\cos(h\beta)+\frac{1}{2h}i\sin(h\beta)+(\frac{1}{\tau_{n+1}}+\frac{\tau_{n+1}^{-\alpha}}{2^{1-\alpha}\Gamma(2-\alpha)}+\frac{1}{h^2}) )\hat{\rho}^{n+1}[k] \\
    &=(\frac{1}{h^2}\cos(h\beta)-\frac{1}{2h}i\sin(h\beta)+(\frac{1}{\tau_{n+1}}-\frac{\tau_{n+1}^{-\alpha}}{2^{1-\alpha}\Gamma(2-\alpha)}-\frac{1}{h^2}) )\hat{\rho}^{n}[k] \\
    &\quad +\frac{1}{\Gamma(1-\alpha)}((a_{0, n} +\frac{e^{-\lambda {\tau_{n+1}}/{2}}\tau_{n+1}^{-\alpha}}{2^{-\alpha}(1-\alpha)}-e^{-\lambda {\tau_{n+1}}/{2}} ({\tau_{n+1}}/{2})^{-\alpha})\hat{\rho}^{n}[k]\\
    &\quad +\sum\limits_{l=1}^{n-1} (a_{n-l, n}+b_{n-1-l, n}) \hat{\rho}^{l}[k]+(b_{n-1, n}+{e^{-\lambda \bar{t}_n}}{\bar{t}_n^{-\alpha}}) \hat{\rho}^{0}[k]).
\end{aligned}
\right.
\end{equation}
\begin{lemma}\label{dnd0}
    Suppose that $\hat{\rho}_{n}[k]$ $ (1\le n\le N)$ satisfy (\ref{d0}).  Then 
    \begin{align}
    |\hat{\rho}^{n}[k]|\leq |\hat{\rho}^{0}[k]|,\quad 1\le n\le N.\label{rho1}
    \end{align}
\end{lemma}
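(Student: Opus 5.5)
We argue by strong induction on $n$. To lighten notation, write (\ref{d0}) as
\begin{align*}
A_n\,\hat{\rho}^{n+1}[k] = B_n\,\hat{\rho}^{n}[k] + \frac{1}{\Gamma(1-\alpha)}\Big(c_{n}\,\hat{\rho}^{n}[k]+\sum_{l=1}^{n-1} d_{l,n}\,\hat{\rho}^{l}[k]+ e_n\,\hat{\rho}^{0}[k]\Big).
\end{align*}
Here $A_n = \frac{1}{\tau_{n+1}}+\frac{\tau_{n+1}^{-\alpha}}{2^{1-\alpha}\Gamma(2-\alpha)}+\frac{2}{h^2}\sin^2(h\beta/2)+\frac{i}{2h}\sin(h\beta)$, using $1-\cos(h\beta)=2\sin^2(h\beta/2)$. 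The coefficient $B_n$ has real part $\frac{1}{\tau_{n+1}}-\frac{\tau_{n+1}^{-\alpha}}{2^{1-\alpha}\Gamma(2-\alpha)}-\frac{2}{h^2}\sin^2(h\beta/2)$ and imaginary part $-\frac{1}{2h}\sin(h\beta)$. The weights $c_n,d_{l,n},e_n$ are the real coefficients of the fractional history appearing in (\ref{d0}).

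For the inductive step, assume $|\hat{\rho}^l[k]|\le|\hat{\rho}^0[k]|$ for all $0\le l\le n$. The triangle inequality then gives
\begin{align*}
|A_n|\,|\hat{\rho}^{n+1}[k]|\le\Big(|B_n|+\frac{1}{\Gamma(1-\alpha)}\big(|c_n|+\sum_{l=1}^{n-1}|d_{l,n}|+|e_n|\big)\Big)|\hat{\rho}^0[k]|.
\end{align*}
It therefore suffices to show
\begin{align*}
|B_n|+\frac{1}{\Gamma(1-\alpha)}\Big(|c_n|+\sum_{l=1}^{n-1}|d_{l,n}|+|e_n|\Big)\le |A_n|.
\end{align*}
The base case $n=0$ is the same inequality with an empty sum and only the $\hat{\rho}^0$ terms present.

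We would prove this inequality in two steps.

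First, since $\omega_i,s_i>0$, the definitions (\ref{lambda}) and (\ref{ab}) show that $\lambda^1_{i,n},\lambda^2_{i,n}\ge0$. This follows from $e^{-x}-1+x\ge0$ and $1-e^{-x}-xe^{-x}\ge0$. Hence every $a_{j,n}$ and $b_{j,n}$ is nonnegative, so $d_{l,n}\ge0$ and $e_n\ge0$. The total weight can then be computed by testing the discrete operator on the constant function $u\equiv1$. The operator ${}^{FC}_0D^{\alpha,\lambda}_t$ was built by exact integration of the SOE kernel against piecewise linear interpolants, and these reproduce constants exactly. So the identity ${}^{FC}_0D^{\alpha,\lambda}_t 1\ge 0$ (it is approximately $\lambda^\alpha$-type and positive for the tempered operator) expresses the sum of the history weights through the leading coefficient $\frac{\tau_{n+1}^{-\alpha}}{2^{1-\alpha}\Gamma(2-\alpha)}(2-2\alpha e^{-\lambda\tau_{n+1}/2})$. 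This yields
\begin{align*}
\frac{1}{\Gamma(1-\alpha)}\Big(|c_n|+\sum_l d_{l,n}+e_n\Big)\le \frac{2\tau_{n+1}^{-\alpha}}{2^{1-\alpha}\Gamma(2-\alpha)}.
\end{align*}
The sign of $c_n$ requires a separate check. We would compare $a_{0,n}$ with $e^{-\lambda\tau_{n+1}/2}(\tau_{n+1}/2)^{-\alpha}$ using $\int_{t_{n-1}}^{t_n}(\bar t_n-s)^{-1-\alpha}ds\le\alpha^{-1}(\tau_{n+1}/2)^{-\alpha}$ together with the SOE approximation.

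Second, write $p=\frac{1}{\tau_{n+1}}$, $q=\frac{\tau_{n+1}^{-\alpha}}{2^{1-\alpha}\Gamma(2-\alpha)}$, $s=\frac{2}{h^2}\sin^2(h\beta/2)\ge0$ and $m=\frac{1}{2h}\sin(h\beta)$. The target becomes
\begin{align*}
\sqrt{(p-q-s)^2+m^2}+2q\le\sqrt{(p+q+s)^2+m^2}.
\end{align*}
Since the imaginary parts coincide, this reduces to comparing real parts. Because $\sqrt{x^2+m^2}$ is $1$-Lipschitz in $x$ and increasing in $|x|$, it suffices to have $|p-q-s|+2q\le p+q+s$. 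When $p\ge q+s$ this is an equality. Otherwise it needs $q\le p$, i.e. $\tau_{n+1}^{1-\alpha}\le 2^{1-\alpha}\Gamma(2-\alpha)$, which holds for sufficiently small time steps. If the Lipschitz reduction turns out to be too lossy, we would instead square both sides and use $4q(p+s)\ge 4q^2$ directly.

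\textbf{Main obstacle.} The hardest part is the first step: controlling the total mass of the SOE history weights, and especially the sign of $c_n$ and the SOE error in $a_{0,n}$. We would first try the exact-constant test above, absorbing the $O(\varepsilon)$ defect into the strict margin $\frac{2}{h^2}\sin^2(h\beta/2)$ plus $p$. If that fails, we would argue directly via monotonicity of the exponential weights, as in \cite{cao2020}.
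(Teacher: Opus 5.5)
Your first step is sound and is essentially the paper's bound (\ref{aplusb}). Testing the operator on $u\equiv 1$ gives total weight at most $2q$; the paper uses only the positivity of the history part and gets the slightly sharper $2q\,e^{-\lambda\tau_{n+1}/2}$. Both hold only up to the $O(\varepsilon)$ SOE defect, which the paper ignores. The sign of $c_n$ needs no separate check, since $c_n=a_{0,n}+\frac{\alpha}{1-\alpha}e^{-\lambda\tau_{n+1}/2}(\tau_{n+1}/2)^{-\alpha}\ge 0$.

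The genuine gap is your second step. The $1$-Lipschitz property of $x\mapsto\sqrt{x^2+m^2}$ gives $\sqrt{Y^2+m^2}-\sqrt{X^2+m^2}\le |Y|-|X|$. That is an \emph{upper} bound on the gap, whereas you need a lower bound. A common imaginary part shrinks the gap, since
\[
\sqrt{Y^2+m^2}-\sqrt{X^2+m^2}=\frac{Y^2-X^2}{\sqrt{Y^2+m^2}+\sqrt{X^2+m^2}} .
\]
For example, take $s=0$ and $p\ge q$. Your sufficient condition $|p-q-s|+2q\le p+q+s$ then holds with equality, yet $\sqrt{(p-q)^2+m^2}+2q\le\sqrt{(p+q)^2+m^2}$ fails for every $m\neq 0$. (Incidentally, for $s>0$ it is not an equality, since $|p-q-s|+2q=p+q-s$ when $p\ge q+s$.) So ``the imaginary parts coincide, hence compare real parts'' is false. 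Your fallback does not repair it either: after one squaring the target is $q\sqrt{(p-q-s)^2+m^2}+q^2\le p(q+s)$, which still involves $m$ and cannot follow from $p+s\ge q$.

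What is missing is the coupling between the advection symbol and the diffusion symbol, which the paper uses in the form $d^2\le A/2$:
\[
m^2=\frac{\sin^2(h\beta)}{4h^2}=\frac{(1-\cos h\beta)(1+\cos h\beta)}{4h^2}\le \frac{s}{2}.
\]
With this, your target closes even with the constant $2q$. For $p\ge q$, squaring twice shows it is equivalent to $q^2m^2\le s\,(p^2-q^2)(2q+s)$. By $m^2\le s/2$, this follows from $p^2-q^2\ge q/4$, a smallness condition on $\tau_{n+1}$ of the same kind as the paper's $\tau_{n+1}^{2-2\alpha}<1/3$. Without some bound of $m^2$ by $s$ no argument can succeed, because for fixed $m\neq 0$ the inequality fails as $s\to 0$. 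So the real obstacle is this step, not the weight bound you flagged.
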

\begin{proof}
Use induction on $n$. The case $n=1$ of (\ref{rho1}) is
\begin{align}
    |\hat{\rho}_{1}[k]|=\left|\frac{\frac{1}{h^2}\cos(h\beta)-\frac{1}{2h}i\sin(h\beta)+(\frac{1}{\tau_{1}}-\frac{1-2e^{-\lambda {\tau_{1}}/{2}}}{2^{1-\alpha}\Gamma(2-\alpha){\tau_{1}}^\alpha}-\frac{1}{h^2}) }{-\frac{1}{h^2}\cos(h\beta)+\frac{1}{2h}i\sin(h\beta)+(\frac{1}{\tau_{1}}+\frac{1}{2^{1-\alpha}\Gamma(2-\alpha){\tau_{1}}^\alpha}+\frac{1}{h^2}) }\right||\hat{\rho}^{0}[k]| \leq |\hat{\rho}^{0}[k]|.
\end{align}
Fix $n\in \{1,2,\dots,N-1\}$. Assume that (\ref{rho1}) is valid for \(m=1,2,\dots,n\). Then (\ref{lambda}), (\ref{ab})
and the inductive hypothesis yield
\begin{align}
  a_{0,n}+\sum_{l=1}^{n-1}(a_{l, n}+b_{l, n})+b_{n-1,n}  
  &=\sum_{l=0}^{n-1}\alpha \sum_{i=1}^{N_{exp}}w_i\int_{t_{n-l-1}}^{t_{n-l}}e^{-(\lambda+s_i)(\bar{t}_n-s)}ds.\notag
\end{align}
Combining the SOE approximation \eqref{SOE} and the above expression yields
\begin{align}
  a_{0,n}+\sum_{l=1}^{n-1}(a_{l, n}+b_{l, n})+b_{n-1,n}  
&=\alpha\int_{{\tau_{n+1}}/{2}}^{\bar{t}_n}{e^{-\lambda s}}{s^{-1-\alpha}}ds\notag\\
  &\leq {e^{-\lambda {\tau_{n+1}}/{2}}}{({\tau_{n+1}}/{2})^{-\alpha}}-{e^{-\lambda \bar{t}_n}}{\bar{t}_n^{-\alpha}}.\label{aplusb}
\end{align}
Applying the inequality (\ref{aplusb}) to (\ref{d0}), we get
\begin{align}
    &|-\frac{1}{h^2}\cos(h\beta)+\frac{1}{2h}i\sin(h\beta)+(\frac{1}{\tau_{n+1}}+\frac{\tau_{n+1}^{-\alpha}}{2^{1-\alpha}\Gamma(2-\alpha)}+\frac{1}{h^2}) ||\hat{\rho}^{n+1}[k]|\notag \\
    \leq&|\frac{1}{h^2}\cos(h\beta)-\frac{1}{2h}i\sin(h\beta)+(\frac{1}{\tau_{n+1}}-\frac{\tau_{n+1}^{-\alpha}}{2^{1-\alpha}\Gamma(2-\alpha)}-\frac{1}{h^2}) ||\hat{\rho}^{n}[k]| +\frac{e^{-\lambda {\tau_{n+1}}/{2}}\tau_{n+1}^{-\alpha}}{2^{-\alpha}\Gamma(2-\alpha)}|\hat{\rho}^{0}[k]|.\label{need1}
\end{align}
Define
\begin{align*}
A &= \frac{1-\cos(h\beta)}{h^2},\quad B=\frac{\tau_{n+1}^{-\alpha}}{2^{1-\alpha}\Gamma(2-\alpha)},\quad C=2Be^{-\lambda{\tau_{n+1}}/{2}},\\
a&=-A+\frac{1}{\tau_{n+1}}-B,\quad b=A+\frac{1}{\tau_{n+1}}+B,\quad d=\frac{\sin(h\beta)}{2h}.
\end{align*}
To prove that $|\hat{\rho}^{n+1}[k]|\leq |\hat{\rho}^{0}[k]|$ by \eqref{need1}, it suffices to establish the following inequality:
\begin{align*}
& \sqrt{a^2 + d^2} + C \le \sqrt{b^2 + d^2}.
\end{align*}
Noticing that $d^2 \leq {A}/{2}$ and $(A+B)^2-B^2e^{-\lambda\tau_{n+1}}\geq A^2+2AB$, it is sufficient to verify
\begin{align}
    {Be^{-\lambda\tau_{n+1}}\tau_{n+1}^2}\leq 4({1-B^2e^{-\lambda\tau_{n+1}}\tau_{n+1}^2}).\label{need2}
\end{align}
Using the bound \(1/2<\Gamma(2-\alpha)<1\) and straightforward estimates, inequality (\ref{need2}) can be reduced to the simpler condition
\[
\tau_{n+1}^{2-2\alpha}<1/3,
\]
which is easily satisfied in typical numerical simulations. Consequently, (\ref{need1}) and (\ref{need2}) yield \(|\hat{\rho}^{n+1}[k]|\leq |\hat{\rho}^{0}[k]|\). Thus we
have proved (\ref{rho1}) for $m = n+1$. By the principle of induction, the lemma is proved.
\end{proof}

Based on the above analysis and Lemma \ref{dnd0}, we are now in a position to present the unique solvability and stability for the difference scheme (\ref{scheme}) in Theorems \ref{stable}.
\begin{theorem}\label{stable}
The finite difference scheme (\ref{scheme}) is uniquely solvable and stable. Let $\{U^n = (U_0^n, \ldots, U_M^n) \mid 0 \le n \le N\}$ and $\{V^n = (V_0^n, \ldots, V_M^n) \mid 0 \le n \le N\}$ denote the solutions of the scheme (\ref{scheme}) corresponding to two different initial values. Then
\begin{align}
\|U^n-V^n\|_{L_2} \leq \|U^0-V^0\|_{L_2},\quad 1\le n\le N. 
\end{align}
\end{theorem}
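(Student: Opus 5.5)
Both claims follow from Lemma~\ref{dnd0} together with the Parseval identity \eqref{rhoequal}.

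\textbf{Stability.} Let $U^n$ and $V^n$ solve \eqref{scheme} with the same right-hand side $f$ and boundary data but different initial values. Set $\rho_j^n=U_j^n-V_j^n$ as in \eqref{expression}. By linearity of \eqref{scheme}, $\rho^n$ satisfies the homogeneous recursion \eqref{errorQ}, and after the discrete Fourier transform each mode $\hat\rho^n[k]$ satisfies \eqref{d0}. Lemma~\ref{dnd0} then gives $|\hat\rho^n[k]|\le|\hat\rho^0[k]|$ for every $k$ and every $1\le n\le N$. Squaring, summing over $0\le k\le M-1$, multiplying by $L$ and using \eqref{rhoequal} on both sides gives
\begin{align*}
\|U^n-V^n\|_{L_2}^2=L\sum_{k=0}^{M-1}|\hat\rho^n[k]|^2\le L\sum_{k=0}^{M-1}|\hat\rho^0[k]|^2=\|U^0-V^0\|_{L_2}^2 .
\end{align*}
Taking square roots yields the stated bound.

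\textbf{Unique solvability.} At each step $n\to n+1$ the scheme \eqref{scheme} is a square linear system for the interior unknowns $U_1^{n+1},\dots,U_{M-1}^{n+1}$. All earlier levels $U^0,\dots,U^n$ enter only the right-hand side through the history sum in \eqref{fastoperator}. It therefore suffices to show that the homogeneous system, with zero right-hand side, has only the trivial solution. I would argue this in two ways.

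\emph{(a) Fourier argument.} Set $\rho^0=\dots=\rho^n=0$ in \eqref{errorQ}. Then \eqref{d0} reduces to $\big(b-A\cdot 0+\tfrac{i\sin(h\beta)}{2h}-\dots\big)\hat\rho^{n+1}[k]=0$, and its coefficient has real part
\[
\frac{1-\cos(h\beta)}{h^2}+\frac1{\tau_{n+1}}+B\;\ge\;\frac1{\tau_{n+1}}>0 .
\]
Hence $\hat\rho^{n+1}[k]=0$ for every $k$, and $\rho^{n+1}=0$.

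\emph{(b) Matrix argument.} The coefficient matrix is tridiagonal with diagonal entry $\frac1{\tau_{n+1}}+B+\frac1{h^2}$ and off-diagonal entries $-\frac1{2h^2}\pm\frac1{4h}$. The diagonal is at least
\[
\frac{1}{h^2}\ \ge\ \Big|\frac1{2h^2}-\frac1{4h}\Big|+\Big|\frac1{2h^2}+\frac1{4h}\Big| \qquad\text{when } h\le 2 ,
\]
with the extra positive term $\frac1{\tau_{n+1}}+B$ making the dominance strict. So the matrix is strictly diagonally dominant and hence nonsingular. When $h\le2$ fails, the Fourier argument (a) still applies. Either way each step has a unique solution, and induction on $n$ gives unique solvability of the whole scheme.

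\textbf{Main obstacle.} The substantive step is Lemma~\ref{dnd0}, which is assumed here. It carries the mild step-size condition $\tau_{n+1}^{2-2\alpha}<1/3$, so the stability statement holds under that condition on the mesh, and this should be stated.

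A second point needs care: justifying \eqref{rhoequal} for the grid functions vanishing at $j=0,M$, in particular that the indexing $0\le j\le M-1$ covers all interior nodes and that the periodic extension is consistent with the stencil at the boundary. Since $\rho_0^n=\rho_M^n=0$, the periodic extension agrees with the boundary conditions, so the DFT manipulation leading to \eqref{d0} holds for every $j$. With that checked, the argument is complete.
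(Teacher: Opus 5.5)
Your stability argument is the paper's own: Lemma~\ref{dnd0} applied mode by mode, then Parseval \eqref{rhoequal}. Your remark that the bound silently inherits that lemma's step-size restriction $\tau_{n+1}^{2-2\alpha}<1/3$ is correct.

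For unique solvability you take a different route. The paper writes out the tridiagonal Toeplitz coefficient matrix and uses the closed-form eigenvalues $\eta+\frac1{h^2}+2\sqrt{bc}\,\cos(k\pi/M)$ with $bc=\frac1{4h^4}-\frac1{16h^2}$. You use strict diagonal dominance, which is more elementary and needs no eigenvalue formula. Both arguments work exactly in the range $h\le 2$, where $bc\ge 0$. For $h>2$ the paper's eigenvalues are complex with real part $\eta+1/h^2>0$. So its formula still gives nonsingularity there, although its stated bound $\lambda_k\ge\eta$ no longer makes sense.

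Your fallback (a) for $h>2$, however, does not work as written. Neither does your closing claim that the periodic extension makes the DFT step valid ``for every $j$''. The boundary \emph{values} are compatible with periodicity, but the \emph{stencil} is not, because the scheme imposes no equation at $j=0$.

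Multiply the $j$-th equation by $e^{-ij\theta_k}$, $\theta_k=2\pi k/M$, and sum over the nodes $1\le j\le M-1$ where it actually holds. The left side becomes $\sigma(\theta_k)\hat\rho^{n+1}[k]-c_+\rho_1^{n+1}-c_-\rho_{M-1}^{n+1}$, with $c_\pm=-\frac1{2h^2}\pm\frac1{4h}$ and $\sigma$ the coefficient of $\hat\rho^{n+1}$ in \eqref{d0}. Similar leftover terms appear on the right.

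The DFT diagonalizes the circulant operator, not the Dirichlet one. So $\mathrm{Re}\,\sigma>0$ only shows that the circulant matrix is invertible. For the homogeneous Dirichlet system you actually get $\sigma(\theta_k)\hat\rho[k]=r:=c_+\rho_1+c_-\rho_{M-1}$, not $0$.

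Here is a fix valid for every $h$. Write the matrix as $\eta I+\frac12 S+\frac12 K$, where $S$ is the symmetric positive definite matrix of $-\delta_x^2$ and $K$ is the skew-symmetric matrix of $\delta_x$ under homogeneous Dirichlet conditions. Then $v^{\mathsf T}(\eta I+\frac12S+\frac12K)v\ge\eta\,v^{\mathsf T}v$ for real $v$, so the null space is trivial.

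Alternatively, complete the Fourier argument. From $0=M\rho_0=\sum_k\hat\rho[k]=r\sum_k\sigma(\theta_k)^{-1}$ and $\mathrm{Re}\,\sigma(\theta_k)^{-1}>0$ you get $r=0$, hence $\hat\rho\equiv0$.

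The same leftover boundary terms arise when \eqref{errorQ} is transformed into \eqref{d0} in the stability part. There you rely on \eqref{d0} exactly as the paper does, so the issue is inherited rather than introduced. It should not, however, be presented as checked.
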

\begin{proof}
First, we prove the unique solvability of the proposed difference scheme. Let 
        $$
        U^n=(U_0^n, U_1^n, \ldots, U_M^n). 
        $$
        From (\ref{scheme}), the initial value $U^0$ is known. Assume that the first $n$ layers, $U^0, U^1, \ldots, U^n$, are uniquely determined. Then, the scheme yields a linear system for $U^{n+1}$. To prove uniqueness, it suffices to show that the corresponding homogeneous system 
        \begin{equation}
            \left\{
        \begin{aligned}
        &\eta {U_i^{n+1}}=\frac{1}{2}\delta_x^2U_i^{n+1}-\frac{1}{2}{\delta}_xU_i^{n+1}, \quad1\leq i\leq M-1,\notag\\
        &U_0^n=U_M^n=0 ,\label{unique}
        \end{aligned}
        \right.
        \end{equation}
        has only the zero solution, where $\eta = \frac{1}{\tau_{n+1}} + \frac{\tau_{n+1}^{-\alpha}}{2^{1-\alpha}\Gamma(2-\alpha)}$.
Let
\[
\mathbf{U}^{n+1}
= \big( U_1^{n+1}, U_2^{n+1}, \ldots, U_{M-1}^{n+1} \big)^{\mathsf T},
\]
and define the coefficient matrix $A \in \mathbb{R}^{(M-1)\times(M-1)}$ by
\[
A =
\begin{pmatrix}
\eta+\dfrac{1}{h^2} & \dfrac{1}{4h}-\dfrac{1}{2h^2} & 0 & \cdots & 0 \\
-\dfrac{1}{4h}-\dfrac{1}{2h^2} & \eta+\dfrac{1}{h^2} & \dfrac{1}{4h}-\dfrac{1}{2h^2} & \cdots & 0 \\
0 & -\dfrac{1}{4h}-\dfrac{1}{2h^2} & \eta+\dfrac{1}{h^2} & \cdots & 0 \\
\vdots & \vdots & \ddots & \ddots & \vdots \\
0 & 0 & \cdots & -\dfrac{1}{4h}-\dfrac{1}{2h^2} & \eta+\dfrac{1}{h^2}
\end{pmatrix}.
\]
Then the discrete system (\ref{unique}) can be written compactly as
\[
A \, \mathbf{U}^{n+1} = \mathbf{0}.
\]
Since $A$ is tridiagonal, its eigenvalues are given by \cite{gregory1969} as
$$
\lambda_k=\eta+\frac{1}{h^2}+2\sqrt{(\frac{1}{4h}-\frac{1}{2h^2})(-\frac{1}{4h}-\frac{1}{2h^2})}\cos(\frac{k\pi}{M}),\quad 1\le k\le M-1.
$$
Since all eigenvalues satisfy $\lambda_k \ge \eta > 0$, the matrix $A$ is invertible. Consequently, (\ref{unique}) admits only the trivial solution.By induction, the uniqueness of the solution for all $n$ follows.

    We proceed to the stability analysis of the scheme. Using Lemma~\ref{dnd0} together with~(\ref{rhoequal}), we obtain
    \begin{equation*}
\begin{aligned}
\|U^n-V^n\|_{L_2}^2 =L\sum_{k=0}^{M-1}|\hat{\rho}^n[k]|^2 \leq L\sum_{k=0}^{M-1}|\hat{\rho}^0[k]|^2=\|U^0-V^0\|_{L_2}^2.
\end{aligned}
\end{equation*}
Taking the square root on both sides yields the desired estimate and completes the proof.
\end{proof}

\subsection{Convergence of the fully discrete scheme}\label{secconvergen}
We now turn to the convergence analysis of the proposed difference scheme. To this end, we need some preparations.
\begin{lemma} \label{second}
Under the regularity assumptions \eqref{regularity}, 
there exists a positive constant $K$, independent of $h$, $N$, and $n$, such that
\begin{equation}\label{spaceerror}
    \begin{aligned}
    |R_t^{i, \bar{n}}| &= |u_t(x_i,  \bar{t}_n) -{\delta}_t u_i^{\bar{n}}| \leq \begin{cases} 
        KN^{-r(\delta-1)},&n=0,\\
        K(n+1)^{-2}, &n\geq 1,
        \end{cases}\\
    |R_x^{i, \bar{n}}|&=|u_x(x_i, \bar{t}_n)-{\delta}_x u_i^{\bar{n}}|\leq \begin{cases} 
        K(N^{-r\delta}+h^2),&n=0,\\
        K((n+1)^{-2}+h^2), &n\geq 1,
        \end{cases}\\
     |R_{xx}^{i, \bar{n}}|&=|u_{xx}(x_i, \bar{t}_n)-\delta_x^2u_i^{\bar{n}}|\leq \begin{cases} 
        K(N^{-r\delta}+h^2),&n=0,\\
        K((n+1)^{-2}+h^2), &n\geq 1.
        \end{cases}
    \end{aligned}
\end{equation}
\end{lemma}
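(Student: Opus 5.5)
The plan is to treat all three estimates by Taylor expansion in time about the midpoint $\bar t_n$, with integral remainders. The bounds then come from the regularity assumptions \eqref{regularity2}--\eqref{regularity3} combined with the graded-mesh facts already used in Lemma~\ref{Ru}. For $n\ge1$ these facts are $\tau_{n+1}\le CTN^{-r}(n+1)^{r-1}$ from \eqref{step} and $t_n\ge 2^{-r}t_{n+1}$. The spatial parts are handled by the standard central-difference remainders, using \eqref{regularity1}.

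\textbf{Temporal term $R_t$.} I write $\delta_t u_i^{\bar n}-u_t(x_i,\bar t_n)$ as an average of the form $\frac{1}{\tau_{n+1}}\int_{t_n}^{t_{n+1}} K(s)\,u_{ttt}(x_i,s)\,ds$, where the Peano kernel satisfies $|K(s)|\le C\tau_{n+1}^2$. The first-order (second-derivative) terms cancel exactly because $\bar t_n$ is the midpoint.

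For $n\ge1$, \eqref{regularity2} with $l=3$ gives $|u_{ttt}|\le C(1+t_n^{\delta-3})$ on $[t_n,t_{n+1}]$. Hence
\[
|R_t|\le C\tau_{n+1}^2 t_n^{\delta-3}\le C N^{-2r}(n+1)^{2r-2}N^{-r(\delta-3)}(n+1)^{r(\delta-3)}.
\]
Collecting the powers of $N$ and $n+1$ gives the stated $(n+1)^{-2}$ rate. This is to be read in the paper's convention, where the $N$-dependence is absorbed by comparing with $N^{-2}$-type quantities; I will track the exponents carefully and note that $r(\delta-1)$ controls the result.

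For $n=0$ the integrand is not integrable against the kernel near $0$, so I bound the two pieces separately. First, $|u_t(\bar t_0)|\le C(1+\bar t_0^{\delta-1})$. Second, $|\delta_t u^{\bar 0}|=\tau_1^{-1}\left|\int_0^{t_1}u_t\right|\le C(1+t_1^{\delta-1})$. Since $t_1=TN^{-r}$, this yields $KN^{-r(\delta-1)}$.

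\textbf{Spatial terms $R_x$ and $R_{xx}$.} I split each as
\[
u_x(\bar t_n)-\tfrac12\bigl(u_x(t_n)+u_x(t_{n+1})\bigr)\;+\;\tfrac12\sum_{m=n,n+1}\bigl(u_x(x_i,t_m)-\delta_xu_i^m\bigr),
\]
and analogously for $u_{xx}$.

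The second part is $O(h^2)$ by Taylor expansion in $x$, using the bounds on $\partial_x^3u$ and $\partial_x^4u$ from \eqref{regularity1}.

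The first part is the trapezoidal averaging error. It equals $-\tfrac12\int$ of $\partial_t^2\partial_x^p u$ against a kernel of size $\le\tau_{n+1}$ and support of length $\tau_{n+1}$. For $n\ge1$, \eqref{regularity3} gives the bound $C\tau_{n+1}^2 t_n^{\delta-2}$, which I convert using the mesh facts exactly as for $R_t$. For $n=0$ I instead bound the differences $u_x(\bar t_0)-u_x(t_0)$ and $u_x(t_1)-u_x(t_0)$ directly. This uses a first time derivative of $u_x$, obtained by integrating \eqref{regularity3} once, which gives $|\partial_t u_x|\le C(1+t^{\delta-1})$; this is integrable since $\delta>1$. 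The result is a bound $Ct_1^{\delta}=CN^{-r\delta}$.

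\textbf{Main obstacle.} The main difficulty is the bookkeeping for $n\ge1$: converting $\tau_{n+1}^2t_n^{\delta-3}$ (respectively $\tau_{n+1}^2t_n^{\delta-2}$) into a clean power of $(n+1)$ that is uniform in $N$. This requires choosing the grading $r$ so that $r(\delta-1)\ge2$, or else stating the bound as $\min\{2,r(\delta-1)\}$-type. My first attempt will be to combine the mesh facts as above and to absorb the residual powers of $N$ using $n+1\le N$.
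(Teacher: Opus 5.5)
You handle $n\ge1$ the same way the paper does: Taylor expansion with integral remainder about $\bar t_n$, then \eqref{regularity2}--\eqref{regularity3}, then $\tau_{n+1}\le CN^{-r}(n+1)^{r-1}$ and $t_n\ge 2^{-r}t_{n+1}$. The bookkeeping you flag as the main obstacle closes for every $r\ge1$: $\tau_{n+1}^2t_n^{\delta-3}\le CN^{-r(\delta-1)}(n+1)^{r(\delta-1)-2}=C(n+1)^{-2}\bigl(\tfrac{n+1}{N}\bigr)^{r(\delta-1)}\le C(n+1)^{-2}$. Likewise $\tau_{n+1}^2t_n^{\delta-2}\le C(n+1)^{-2}\bigl(\tfrac{n+1}{N}\bigr)^{r\delta}$. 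So no condition such as $r(\delta-1)\ge2$ is needed for this lemma; it only matters in Theorem~\ref{converge}.

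The genuine gap is the case $n=0$, for all three terms. There you drop the Taylor remainder and bound pieces separately with fewer derivatives. The hypotheses read $C(1+t^{\delta-l})$, and their constant part does not shrink with $t_1$.
For $R_t$, your two bounds give $|u_t(x_i,\bar t_0)|+|\delta_tu_i^{\bar 0}|\le C(1+t_1^{\delta-1})=O(1)$, not $O(N^{-r(\delta-1)})$. For $u=t\,\psi(x)$ both terms equal $\psi(x_i)$, while $R_t=0$.
For $R_x$ and $R_{xx}$, the bound $|\partial_t\partial_x^pu|\le C(1+t^{\delta-1})$ controls each of $u_x(\bar t_0)-u_x(t_0)$ and $u_x(t_1)-u_x(t_0)$ only by $C(t_1+t_1^{\delta})\le Ct_1=O(N^{-r})$. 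This is sharp for each difference: for $u=t\,\psi(x)$ they equal $\tfrac{t_1}{2}\psi'(x_i)$ and $t_1\psi'(x_i)$. Only the combination $u_x(\bar t_0)-\tfrac12\bigl(u_x(t_0)+u_x(t_1)\bigr)$ is small, because it annihilates functions linear in $t$, and splitting discards that cancellation.

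Your stated reason for splitting is also false. The kernel vanishes at $t_0$ fast enough to absorb the singularity. For $R_t$ the kernel is bounded by $s^2/(2\tau_1)$ on $[0,t_1]$, and $s^2\cdot s^{\delta-3}=s^{\delta-1}$ is integrable. Hence $|R_t^{i,\bar 0}|\le Ct_1^{-1}\int_0^{t_1}s^2(1+s^{\delta-3})\,ds\le Ct_1^{\delta-1}$, which is exactly the paper's third-derivative representation at $n=0$. Equivalently, write $R_t^{i,\bar0}=\tau_1^{-1}\int_0^{t_1}\bigl(u_t(\bar t_0)-u_t(s)\bigr)ds$ and use the integrable bound $|u_{tt}|\le C(1+t^{\delta-2})$.
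For $R_x$ and $R_{xx}$, keep the midpoint--trapezoid kernel $\tfrac12\min(s-t_0,t_1-s)$ against $\partial_t^2\partial_x^pu$ from \eqref{regularity3}. This gives $\int_0^{t_1}\min(s,t_1-s)(1+s^{\delta-2})\,ds\le C(t_1^2+t_1^{\delta})\le Ct_1^{\delta}=CN^{-r\delta}$, which is the paper's $n=0$ estimate.
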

\begin{proof}
We divide the proof into three parts, corresponding to the temporal, first spatial, and second spatial truncation errors.

\medskip
\noindent\textbf{Estimate for $R_t^{i,\bar n}$.}
For $n \ge 1$, Taylor’s expansion with integral remainder implies that there exists $\xi \in (t_n, t_{n+1})$ such that
\[
u_t(x_i,\bar{t}_n) - \delta_t u_i^{\bar{n}}
= \int_{\bar{t}_n}^{\xi} u_{ttt}(x_i,t) (\xi - t) \, dt.
\]
By the regularity assumption~\eqref{regularity} and \eqref{step}, we have
\begin{align*}
|u_t(x_i,\bar{t}_n) - \delta_t u_i^{\bar{n}}|
\le K t_n^{\delta-3} \tau_{n+1}^2
\le K (n+1)^{-2}, \quad n \ge 1.
\end{align*}
For the initial step $n=0$, a direct Taylor expansion together with~\eqref{regularity} yields
\begin{align*}
|u_t(x_i,\bar{t}_0) - \delta_t u_i^{\bar{0}}|
&= | \frac{1}{2\tau_1} \int_{\bar{t}_0}^{t_1} u_{ttt}(x_i,t)(t_1^2 - 2t_1 t) \, dt
      + \frac{1}{2\tau_1} \int_{t_0}^{t_1} u_{ttt}(x_i,t) t^2 \, dt | \le K N^{-r(\delta-1)}.
\end{align*}
Hence, the temporal truncation error satisfies
\[
|R_t^{i,\bar{n}}|
\le
\begin{cases}
K N^{-r(\delta-1)}, & n=0, \\
K (n+1)^{-2}, & n \ge 1.
\end{cases}
\]

\medskip
\noindent\textbf{Estimate for $R_x^{i,\bar{n}}$.}
By applying Taylor expansion in time and integration by parts, we have
\begin{align}
|u_x(x_i,\bar{t}_n) - \delta_x u_i^{\bar{n}}|
&\le
| \int_{\bar{t}_n}^{t_{n+1}} u_{xtt}(x_i,t)(t_{n+1}-t)\,dt
+ \int_{t_n}^{\bar{t}_n} u_{xtt}(x_i,t)(t_n - t)\,dt |
+ O(h^2) \notag \\
&\le C ( t_{n+1}^{\delta-2} \tau_{n+1}^2
+ \int_{t_n}^{\bar{t}_n} t^{\delta-2}(t-t_n)\,dt ) + O(h^2). \label{yijiechashang}
\end{align}
For $n=0$, estimate~\eqref{yijiechashang} together with~\eqref{step} directly yields
\[
|u_x(x_i,\bar{t}_0) - \delta_x u_i^{\bar{0}}|
\le K N^{-r\delta} + O(h^2).
\]
For $n \ge 1$, using the inequality $t_{n+1} \le 2^r t_n$ and estimate~\eqref{yijiechashang}, we deduce
\[
|u_x(x_i,\bar{t}_n) - \delta_x u_i^{\bar{n}}|
\le K (n+1)^{-2} + O(h^2).
\]
Hence, the first spatial truncation error satisfies
\[
|R_x^{i,\bar{n}}|
\le
\begin{cases}
K (N^{-r\delta} + h^2), & n=0, \\
K ((n+1)^{-2} + h^2), & n \ge 1.
\end{cases}
\]

\medskip
\noindent\textbf{Estimate for $R_{xx}^{i,\bar{n}}$.}
The analysis for the second spatial derivative is analogous. Taylor expansion and integration by parts yield
\begin{align*}
|u_{xx}(x_i,\bar{t}_n) - \delta_x^2 u_i^{\bar{n}}|
&\le
| \frac{1}{2} \int_{\bar{t}_n}^{t_{n+1}} u_{xxtt}(x_i,t)(t_{n+1}-t)\,dt
+ \frac{1}{2} \int_{t_n}^{\bar{t}_n} u_{xxtt}(x_i,t)(t_n - t)\,dt |
+ O(h^2) \\
&\le
C ( t_{n+1}^{\delta-2} \tau_{n+1}^2
+ \int_{t_n}^{\bar{t}_n} t^{\delta-2}(t-t_n)\,dt )
+ O(h^2),
\end{align*}
which leads to the same bounds as (\ref{yijiechashang}) for $R_x^{i,\bar{n}}$. Therefore,
\[
|R_{xx}^{i,\bar{n}}|
\le
\begin{cases}
K (N^{-r\delta} + h^2), & n=0, \\
K ((n+1)^{-2} + h^2), & n \ge 1.
\end{cases}
\]
Combining the above estimates completes the proof.
\end{proof}
With the above lemma, we are now in a position to derive the error equation and carry out the convergence analysis.
Let $u_i^n$ and $U_i^n$ be the exact and numerical solutions of 
\eqref{Equation} and the difference scheme \eqref{scheme} at $(x_i, t_n)$, respectively. 
Define the error function by
\[
\varepsilon_i^n = u_i^n - U_i^n.
\]
Then $\varepsilon_i^n$ satisfies the following error equation:
\begin{equation}
    \left\{
\begin{aligned}
&
(-\tfrac{1}{2 h^2}+\tfrac{1}{4h}) \varepsilon_{j+1}^{n+1}
+ (\tfrac{1}{\tau_{n+1}}+\tfrac{\tau_{n+1}^{-\alpha}}{2^{1-\alpha}\Gamma(2-\alpha)}+\tfrac{1}{h^2}) \varepsilon_j^{n+1}
+ (-\tfrac{1}{2 h^2}-\tfrac{1}{4h}) \varepsilon_{j-1}^{n+1}
 \\
&=
(\tfrac{1}{2 h^2}-\tfrac{1}{4h}) \varepsilon_{j+1}^{n}
+ (\tfrac{1}{\tau_{n+1}}-\tfrac{1-2e^{-\lambda {\tau_{n+1}}/{2}}}{2^{1-\alpha}\Gamma(2-\alpha)}\tau_{n+1}^{-\alpha}-\tfrac{1}{h^2}) \varepsilon_j^{n}
+ (\tfrac{1}{2 h^2}+\tfrac{1}{4h}) \varepsilon_{j-1}^{n} \\
&\quad
+ \tfrac{1}{\Gamma(1-\alpha)}
(
(a_{0, n}-e^{-\lambda {\tau_{n+1}}/{2}} ({\tau_{n+1}}/{2})^{-\alpha}) \varepsilon_j^n
+ \sum_{l=1}^{n-1} (a_{n-l, n}+b_{n-1-l, n}) \varepsilon_j^l\\
&\quad+ (b_{n-1, n}+{e^{-\lambda \bar{t}_n}}{\bar{t}_n^{-\alpha}}) \varepsilon_j^0
)+R_j^{\bar{n}}, \label{converge1}
\end{aligned}
\right.
\end{equation}
where $R_j^{\bar{n}}$ denotes the local truncation error at $(x_j, \bar{t}_n)$. 
From Theorem~\ref{FRu} and Lemma~\ref{second}, it follows that
\begin{align}\label{errorall}
    |R_j^{\bar{n}}|&=|{}^{FC}R^{\bar{n}}+R_t^{i, \bar{n}}+R_x^{i, \bar{n}}+R_{xx}^{i, \bar{n}}|\leq \begin{cases}
K (N^{-r(\delta-1)} + h^2), & n=0, \\
K ((n+1)^{-(2-\alpha)} + h^2), & n \ge 1.
\end{cases}
\end{align}
Following the same procedure as in the stability analysis, we define the grid functions:
 \begin{equation}\notag
\varepsilon^n(x)=\left\{\begin{array}{l}
0,  \quad x_0 \leq x \leq \bar{x}_0,  \\
\varepsilon_j^n,  \quad \bar{x}_{j-1} \leq x \leq \bar{x}_j,  \quad 1 \leq j \leq M-1,  \\
0,  \quad \bar{x}_{M-1} \leq x \leq x_M , 
\end{array}\right.  
\end{equation}
\begin{equation}\notag
R^{\bar{n}}(x)=\left\{\begin{array}{l}
0,  \quad x_0 \leq x \leq \bar{x}_0,  \\
R^{\bar{n}}_j,  \quad \bar{x}_{j-1} \leq x \leq \bar{x}_j,  \quad 1 \leq j \leq M-1,  \\
0,  \quad \bar{x}_{M-1} \leq x \leq x_M . 
\end{array}\right. 
\end{equation}
Let $\hat{\varepsilon}^n[k]$ and $\hat{R}^{\bar{n}}[k]$ denote the discrete Fourier transforms of $\varepsilon^n(x)$ and $R^{\bar{n}}(x)$, respectively.
\begin{align}\label{suppose}
\hat{\varepsilon}^n[k]&=\sum_{j=1}^{M-1} e^{-i \frac{2 \pi}{M} j k} \varepsilon^n_j, \quad 
\hat{R}^{\bar n}[k]=\sum_{j=1}^{M-1} e^{-i \frac{2 \pi}{M} j k} R^{\bar n}_j, \quad 0\le k\le M-1 . 
\end{align}
Similar to (\ref{rhoequal}), Parseval’s equality then gives
\begin{align}
    \|\varepsilon^n(x)\|_{L_2}^2=L\sum_{k=0}^{M-1}|\hat{\varepsilon}^n[k]|^2,\quad \|R^{\bar{n}}(x)\|_{L_2}^2=L\sum_{k=0}^{M-1}|\hat{R}^{\bar{n}}[k]|^2.\label{parseval1}
\end{align}
Using \eqref{suppose}, multiplying both sides of \eqref{converge1} by $e^{-ijh\beta}$ with $\beta = 2\pi k/M$, and summing over $j$, we obtain
\begin{equation}\label{conLemma}
\left\{
\begin{aligned}
    &(-\frac{1}{h^2}\cos(h\beta)+\frac{1}{2h}i\sin(h\beta)+(\frac{1}{\tau_{n+1}}+\frac{\tau_{n+1}^{-\alpha}}{2^{1-\alpha}\Gamma(2-\alpha)}+\frac{1}{h^2}) )\hat{\varepsilon}^{n+1}[k] \\
    &=(\frac{1}{h^2}\cos(h\beta)-\frac{1}{2h}i\sin(h\beta)+(\frac{1}{\tau_{n+1}}-\frac{\tau_{n+1}^{-\alpha}}{2^{1-\alpha}\Gamma(2-\alpha)}-\frac{1}{h^2}) )\hat{\varepsilon}^{n}[k] \\
    &\quad +\frac{1}{\Gamma(1-\alpha)}((a_{0, n} +\frac{e^{-\lambda {\tau_{n+1}}/{2}}\tau_{n+1}^{-\alpha}}{2^{-\alpha}(1-\alpha)}-e^{-\lambda {\tau_{n+1}}/{2}} ({\tau_{n+1}}/{2})^{-\alpha})\hat{\varepsilon}^{n}[k]\\
    &\quad +\sum\limits_{l=1}^{n-1} (a_{n-l, n}+b_{n-1-l, n}) \hat{\varepsilon}^{l}[k]+(b_{n-1, n}+{e^{-\lambda \bar{t}_n}}{\bar{t}_n^{-\alpha}}) \hat{\varepsilon}^{0}[k])+\hat{R}^{\bar{n}}[k].
\end{aligned}
\right.
\end{equation}

 We now state the convergence theorem for the fully discrete scheme (\ref{scheme}).
\begin{theorem}\label{converge}
Under the regularity assumption~\eqref{regularity} and for
$r \ge \max\{ 2/(\delta-1),\, 3 \}$,
let $u(x,t)$ and $\{U^n = (U_0^n, \ldots, U_M^n)\}_{n=0}^N$
 be the solution of the problem~\eqref{Equation} and the scheme~\eqref{scheme}, respectively.
Then there exists a constant $C>0$, independent of $h$, $N$, and $\varepsilon$, such that
\begin{align}
\|u^n - U^n\|_{L^2} \le C \big( N^{-2} + h^2 + \varepsilon \big).
\label{2-order}
\end{align}
\end{theorem}

    \begin{proof}
    We first establish the following auxiliary result. Suppose that $\hat{\varepsilon}^n[k]$ $(1 \le n \le N)$ and $\hat{R}^{\bar{n}}[k]$ $(0 \le n \le N-1)$ satisfy \eqref{conLemma}. Then the following estimate holds:
\begin{align}
|\hat{\varepsilon}^n[k]| \le (1+\tau_1)^{n^{,r-2} N^{2}}  |\hat{R}^{\bar 0}[k]|,
\qquad 1 \le n \le N. \label{epsilon11}
\end{align}
We prove \eqref{epsilon11} by induction on $n$.
Since the boundary conditions imply $\varepsilon_j^0 = 0$, it follows from \eqref{suppose} that $\hat{\varepsilon}^0[k] = 0$. In view of condition \eqref{conLemma}, the case $n=1$ in \eqref{epsilon11} yields
    \begin{align}
    |\hat{\varepsilon}^{1}[k]|
    =|\hat{R}^{\bar 0}[k]|({( \frac{1}{\tau_1}+\frac{1}{2\Gamma(2-\alpha)(\frac{\tau_{1}}{2})^\alpha}+\frac{1-\cos(h\beta)}{h^2})^2+(\frac{\sin(h\beta)}{2h})^2})^{-\tfrac{1}{2}}\leq |\hat{R}^{\bar 0}[k]|.\notag
\end{align}
Fix $n\in\{1,2,\dots,N-1\}$. Assume that (\ref{epsilon11}) is valid for $m=1,2,\dots,n$. From \eqref{errorall} and the definition of the discrete Fourier transform \cite{chen2007,Liu2018, zhao2021}, we have
\[
|\hat{R}^{\bar n}[k]| \le {N^2}{(n+1)^{\alpha-2}} |\hat{R}^{\bar 0}[k]|.
\]
Using \eqref{conLemma} and following the same argument as in Lemma~\ref{dnd0}, we derive
\begin{align}
|\hat{\varepsilon}^{n+1}[k]|
&\le (1+\tau_1)^{n^{r-2}N^2} |\hat{R}^{\bar 0}[k]|
+ \tau_1 {N^2}{(n+1)^{\alpha-2}} |\hat{R}^{\bar 0}[k]| \notag \\
&\le ( (1+\tau_1)^{n^{r-2}N^2} + (1+\tau_1)^{{N^2}{(n+1)^{\alpha-2}}} - 1 ) |\hat{R}^{\bar 0}[k]|.\label{epsilon22}
\end{align}
Let $g = (1+\tau_1)^{{N^2}{(n+1)^{\alpha-2}}}$. 
Since $r \ge 3$ and $g\ge1$, a direct calculation then shows that
\begin{align*}
    g^{(n+1)^{r-\alpha}}-g^{n^{r-2}(n+1)^{2-\alpha}}-g+1\ge 0.
\end{align*}
Substituting this expression for $g$ into \eqref{epsilon22} yields
\[
|\hat{\varepsilon}^{n+1}[k]| \le (1+\tau_1)^{(n+1)^{r-2}N^2} |\hat{R}^{\bar 0}[k]|.
\]
Thus, inequality~(\ref{epsilon11}) holds for $m = n + 1$. By induction, inequality~(\ref{epsilon11}) holds for all $n$.
Using~(\ref{epsilon11}), we obtain
\begin{align}
    |\hat{\varepsilon}^n[k]|\leq (1+\tau_1)^{n^{r-2}N^{2}}|\hat{R}^{\bar 0}[k]|\leq (1+\frac{T}{N^r})^{N^r}|\hat{R}^{\bar 0}[k]|\leq e^T|\hat{R}^{\bar 0}[k]|.\notag
\end{align}
    Applying Parseval’s identity~(\ref{parseval1}) and setting $C = e^{T}$, we obtain
    \begin{equation}
\begin{aligned}
\|u^n-U^n\|_{L_2}^2 =L\sum_{k=0}^{M-1}|\hat{\varepsilon}^n[k]|^2\leq C^2L \sum_{k=0}^{M-1}|\hat{R}^{\bar 0}[k]|^2=C^2\|R^{\bar 0}\|_{L_2}^2\leq C^2KL(N^{-2}+h^2+\varepsilon)^2. 
\end{aligned}
\end{equation}
Taking the square root on both sides yields the desired estimate and completes the proof.  
\end{proof}
\begin{rem}
    The convergence order established for scheme~\eqref{scheme} is second order in both time and space. This result is in contrast to the first-order temporal accuracy reported in~\cite{cao2020} and the 
$1+\alpha$ order temporal accuracy obtained in~\cite{qiao2022}. In this respect, the present work provides a clear improvement over the existing results.
    \end{rem}
\section{Numerical Experiments}\label{section4}
 In this section, we apply the proposed fully discrete scheme \eqref{scheme} to \eqref{Equation}, where three cases for $f(x,t)$ and $\phi(x)$ given in \eqref{Equation} are considered, and the corresponding numerical results are presented to verify the theoretical results presented in Theorems \ref{FRu} and \ref{converge}. In addition, in all numerical experiments, we choose $r = 3$ in \eqref{graded} and $\varepsilon = 10^{-10}$ in \eqref{SOE}. All computations are carried out in MATLAB (R2023b) on a computer equipped with an Intel(R) Core(TM) i5 CPU at 2.40 GHz and 8 GB RAM.


\noindent\textbf{\underline{Case 1}}

We consider \eqref{Equation} with $\Omega=(0,1)$, $T=2$, $\phi(x)=x^2(1-x)^2$ and 
\begin{align}
f(x,t) = & ( -\lambda (t^\delta + 1) + \delta t^{\delta - 1} + \frac{\Gamma(\delta+1)}{\Gamma(\delta-\alpha+1)} t^{\delta-\alpha} ) e^{-\lambda t} x^2(1-x)^2 \notag\\
& - ( (12x^2 - 12x + 2) - (4x^3 - 6x^2 + 2x) ) (t^\delta + 1) e^{-\lambda t}.\notag
\end{align}
The exact solution is then given by $u(x,t) = e^{-\lambda t} (t^\delta + 1) x^2(1-x)^2$ (see e.g. \cite{Alikhanov2024_1,cao2020,chen2018,dwivedi2024,stynes2017}), which satisfies the regularity assumptions (\ref{regularity}).  For purposes of comparison, the results obtained by using the L1 scheme \eqref{L1scheme} are also presented.
\begin{table}[htbp]
\centering
\setlength{\tabcolsep}{5pt}   
\caption{A comparison of maximum $L^2$ error and convergence order in time with $\lambda=1$, $\delta=1.8$ and $M=2000$.}
\label{table:errors1t}
\begin{tabular}{c c c c c c c c c}
\toprule
& \multicolumn{4}{c}{$\alpha=0.25$} 
& \multicolumn{4}{c}{$\alpha=0.5$} \\
\cmidrule(lr){2-5} \cmidrule(lr){6-9}
 & \multicolumn{2}{c}{Our scheme (\ref{scheme})} & \multicolumn{2}{c}{L1 scheme (\ref{L1scheme})}   & \multicolumn{2}{c}{Our scheme (\ref{scheme})} & \multicolumn{2}{c}{L1 scheme (\ref{L1scheme})} \\
\cmidrule(lr){2-3} \cmidrule(lr){4-5} \cmidrule(lr){6-7} \cmidrule(lr){8-9}
\multirow{1}{*}{$N$} & Error & Order & Error & Order & Error & Order & Error & Order \\
\midrule
16
& 2.6375e-05  &  --  & 2.6358e-05 &  --  & 2.6521e-05 & --   & 2.6504e-05  & -- \\
32
& 6.7577e-06 &  2.0588 & 6.7844e-06 & 2.0519 & 6.9334e-06 & 2.0283   &  6.9753e-06 & 2.0116 \\
64
& 1.6808e-06  & 2.0540 & 1.6959e-06  & 2.0466   & 1.7685e-06 & 2.0168   & 1.7951e-06 & 2.0037 \\
\bottomrule
\end{tabular}
\end{table}
\begin{table}[htbp]
\centering
\setlength{\tabcolsep}{5pt}   
\caption{A comparison of  maximum $L^2$ error and convergence order in space with $\lambda=1$, $\delta=1.8$ and $N=500$.}
\label{table:errors1s}
\begin{tabular}{c c c c c c c c c}
\toprule
& \multicolumn{4}{c}{$\alpha=0.25$} 
& \multicolumn{4}{c}{$\alpha=0.5$} \\
\cmidrule(lr){2-5} \cmidrule(lr){6-9}
 & \multicolumn{2}{c}{Our scheme (\ref{scheme})} & \multicolumn{2}{c}{L1 scheme (\ref{L1scheme})}   & \multicolumn{2}{c}{Our scheme (\ref{scheme})} & \multicolumn{2}{c}{L1 scheme (\ref{L1scheme})} \\
\cmidrule(lr){2-3} \cmidrule(lr){4-5} \cmidrule(lr){6-7} \cmidrule(lr){8-9}
\multirow{1}{*}{$M$} & Error & Order & Error & Order & Error & Order & Error & Order \\
\midrule
20
& 3.3885e-01 & -- 
& 3.3885e-01 & -- 
& 3.2922e-01 & -- 
& 3.2922e-01 & -- \\
40
& 8.4711e-02 & 2.0000 
& 8.4711e-02 & 2.0000 
& 8.2276e-02 & 2.0000 
& 8.2282e-02 & 2.0000 \\
80
& 2.1138e-02 & 2.0030 
& 2.1138e-02 & 2.0030 
& 2.0503e-02 & 2.0040 
& 2.0510e-02 & 2.0040 \\
\bottomrule
\end{tabular}
\end{table}
\begin{enumerate}
    \item In Table~\ref{table:errors1t} and Table~\ref{table:errors1s}, we tabulate the the maximum $L^2$ errors and the corresponding convergence orders of our proposed scheme \eqref{scheme} and L1 scheme \eqref{L1scheme}. It is observed that both schemes produce essentially identical errors and achieve second-order accuracy in both time and space, which are in perfect agreement with the Theorem \ref{converge}. 
    \item Moreover, we plot in Figure~\ref{fig:cpu_compare} (a) the CPU time (in seconds) with respect to the total number of time steps $N$ for both schemes with $M=500$. We observe that for the same $N$, our scheme uses much less CPU time than the L1 scheme, and the advantage becomes more pronounced as $N$ increases. The CPU time of the proposed scheme grows approximately linearly with respect to $N$, whereas the computational cost of the original L1 scheme exhibits nearly quadratic growth. This observation is fully consistent with the theoretical complexity estimates given in Sect.~\ref{section3}, namely $\mathcal{O}(MN N_{\mathrm{exp}})$ for the proposed scheme and $\mathcal{O}(MN^2)$ for the L1 scheme.  In addition, Figure~\ref{fig:cpu_compare} (b) demonstrates that the number of exponentials satisfies $N_{\mathrm{exp}} = \mathcal{O}(\log N)$, which further validates the theoretical complexity analysis of the proposed method in Sect.~\ref{section3}. Consequently, the proposed schemes can significantly reduce the computational costs while preserving the same global convergence rate as the L1 scheme, thereby improving the computational efficiency.
    \item For purposes of comparison, Table~\ref{table:errors2.2} lists the $H^1$ errors of both the proposed scheme (\ref{scheme}) and the scheme from \cite{cao2020}. The results show that our scheme achieves second-order convergence in the $H^1$ norm, whereas the method in \cite{cao2020} is only first-order convergent in the same norm.
    \end{enumerate}
\begin{figure}[ht]
	\begin{minipage}[t]{0.4\linewidth}
  \centering
  \includegraphics[width=\textwidth]{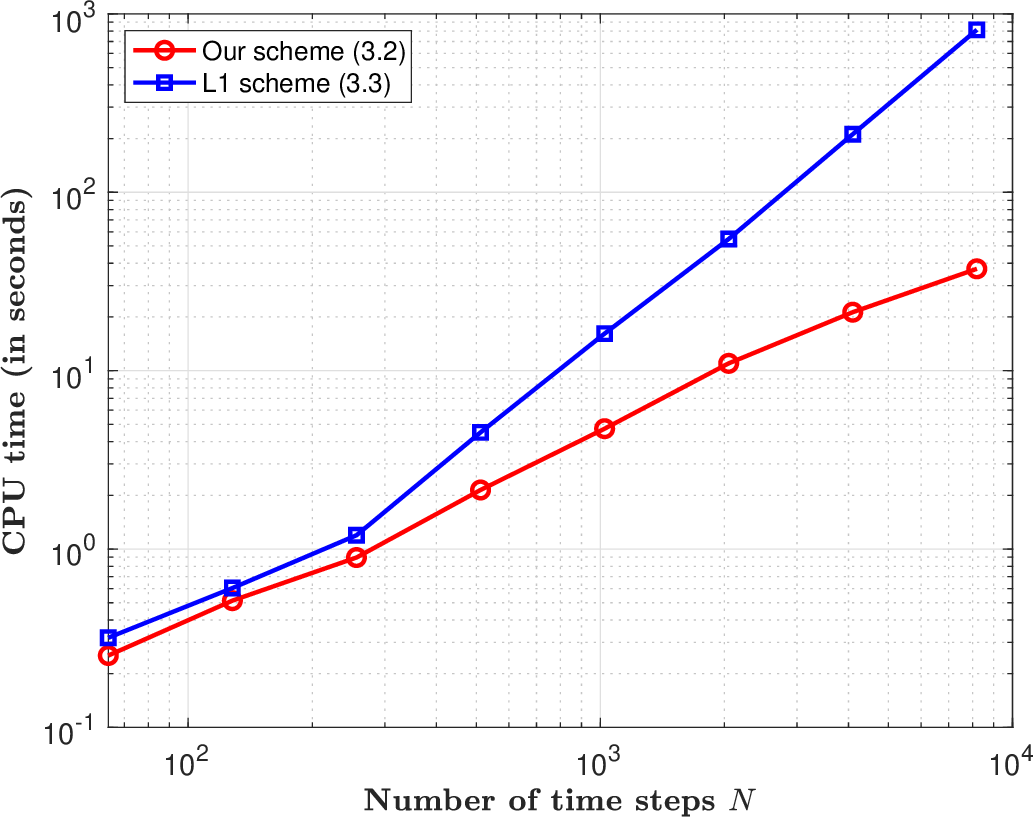}
  \centerline{(a)}
	\end{minipage}%
    \hspace{0.05\linewidth} 
	\begin{minipage}[t]{0.38\linewidth}
   \centering
   \includegraphics[width=\textwidth]{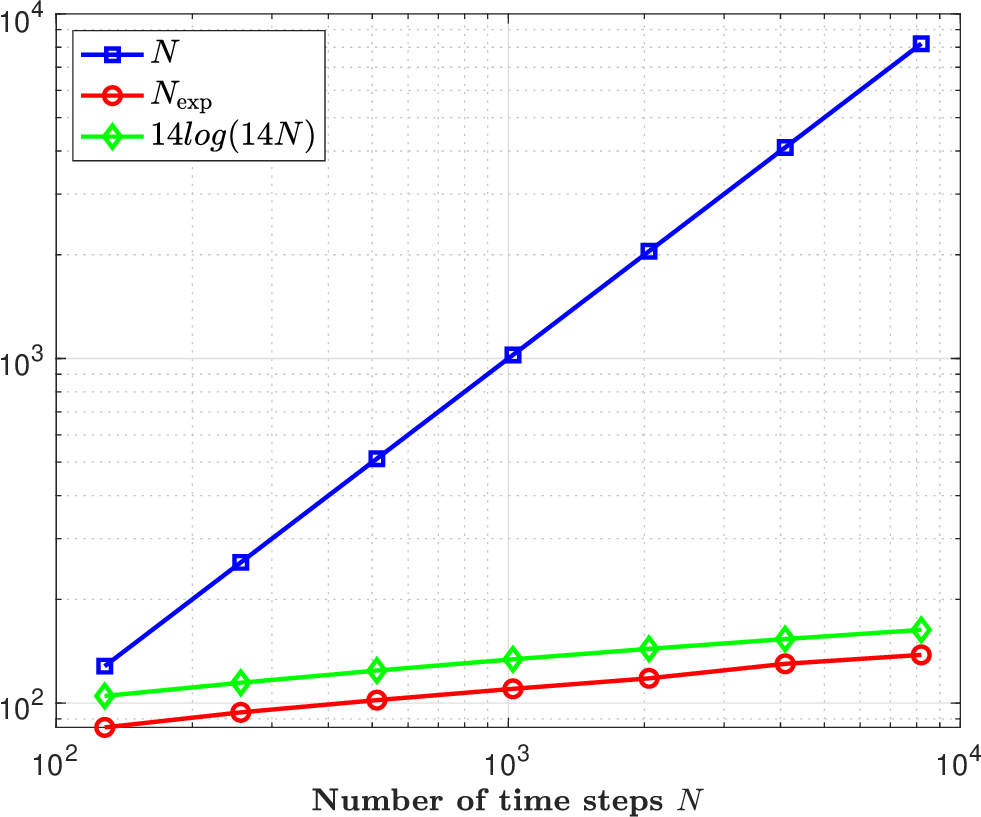}
   \centerline{(b)}
	\end{minipage}
\caption{(Left) A comparison of CPU time in seconds as a function of time steps $N$. (Right) SOE number $N_{exp}$ versus the time steps $N$.}\label{fig:cpu_compare}
\end{figure}
\begin{table}[htbp]
\centering
\caption{A comparison of $H^1$ error and convergence order in time with $\lambda=1$, $\delta=1.8$ and $M=2000$.}
\label{table:errors2.2}
\begin{tabular}{c c c c c c c c c}
\toprule
& \multicolumn{4}{c}{$\alpha=0.25$} 
& \multicolumn{4}{c}{$\alpha=0.5$} \\
\cmidrule(lr){2-5} \cmidrule(lr){6-9}
 & \multicolumn{2}{c}{Our scheme (\ref{scheme})} & \multicolumn{2}{c}{The scheme in \cite{cao2020}}   
 & \multicolumn{2}{c}{Our scheme (\ref{scheme})} & \multicolumn{2}{c}{The scheme in \cite{cao2020}} \\
\cmidrule(lr){2-3} \cmidrule(lr){4-5} \cmidrule(lr){6-7} \cmidrule(lr){8-9}
\multirow{1}{*}{$N$}
 & Error & Order & Error & Order & Error & Order & Error  & Order \\
\midrule
16 
& 2.6614e-04  & --   
& 1.2196e-05 & -- 
& 2.8276e-04 & -- 
& 8.8700e-06 & -- \\
32 
& 6.8095e-05 & 2.0609 
& 6.2261e-06 & 0.9700 
& 7.5985e-05 & 1.9868 
& 4.4926e-06 & 0.9814 \\
64  
& 1.6675e-05  & 2.0769 
& 3.1398e-06 & 0.9877 
& 1.9763e-05 & 1.9881 
& 2.2559e-06 & 0.9938 \\
\bottomrule
\end{tabular}
\end{table}

\noindent\textbf{\underline{Case 2}}

 We consider \eqref{Equation} with $\Omega=(0,1)$, $T=2$, $\phi(x)=\sin (\pi x^2)$ and 
\begin{align}
f(x,t) = & ( -\lambda (t^\delta + 1) + \delta t^{\delta - 1} + \frac{\Gamma(\delta+1)}{\Gamma(\delta-\alpha+1)} t^{\delta-\alpha} ) e^{-\lambda t} \sin(\pi x^2) \notag\\
& - ( -2\pi x\cos(\pi x^2)+2\pi\cos(\pi x^2)-4\pi^2x^2\sin(\pi x^2) ) (t^\delta + 1) e^{-\lambda t}.\notag
\end{align}
The exact solution is then given by $u(x,t) = e^{-\lambda t} (t^\delta + 1) \sin(\pi x^2)$ (see e.g. \cite{cao2020,dwivedi2024,stynes2017,zhu2019}). In what follows, we present our numerical results. 

Again, the maximum $L^2$ errors, together with the corresponding temporal and spatial convergence orders, are reported in Tables~\ref{table:errors2t} and~\ref{table:errors2s}. The numerical results clearly demonstrate that both the proposed scheme~\eqref{scheme} and the L1 scheme~\eqref{L1scheme} achieve second-order convergence in both time and space for all tested cases, which is in perfect agreement with Theorem~\ref{converge}.

Figure~\ref{fig:cpu_compare2} presents the CPU time (in seconds) of both schemes as a function of the number of time steps $N$. It can be observed that the proposed scheme exhibits an almost linear computational complexity with respect to $N$ and is significantly faster than the direct L1 scheme. Moreover, compared with the direct L1 difference scheme, the proposed method attains the same level of accuracy for a prescribed tolerance while substantially reducing the computational cost in terms of elapsed CPU time, particularly when the number of time steps $N$ becomes large. This behavior is fully consistent with the observations in Case~1 and further confirms the efficiency advantage of the proposed scheme.

\begin{table}[htbp]
\centering
\setlength{\tabcolsep}{5pt}   
\caption{A comparison of maximum $L^2$ error and convergence order in time with $\lambda=1$, $\delta=1.8$ and $M=2000$.}
\label{table:errors2t}
\begin{tabular}{c c c c c c c c c}
\toprule
& \multicolumn{4}{c}{$\alpha=0.25$} 
& \multicolumn{4}{c}{$\alpha=0.5$} \\
\cmidrule(lr){2-5} \cmidrule(lr){6-9}
 & \multicolumn{2}{c}{Our scheme \eqref{scheme}} & \multicolumn{2}{c}{L1 scheme \eqref{L1scheme}}   & \multicolumn{2}{c}{Our scheme \eqref{scheme}} & \multicolumn{2}{c}{L1 scheme \eqref{L1scheme}} \\
\cmidrule(lr){2-3} \cmidrule(lr){4-5} \cmidrule(lr){6-7} \cmidrule(lr){8-9}
\multirow{1}{*}{$N$} & Error & Order & Error & Order & Error & Order & Error & Order \\
\midrule
16
& 8.9883e-02 &  --  & 9.0059e-02 & --   & 9.1053e-03 & --   & 9.1303e-03 & -- \\
32
& 2.1358e-03 & 2.1727  & 2.1458e-03 & 2.1686   & 2.1969e-03 & 2.1496   & 2.2128e-03 & 2.1428 \\
64
& 5.3281e-04 & 2.0496   & 5.3792e-04 & 2.0424   & 5.6194e-04 & 2.0127   & 5.7149e-04 & 1.9985 \\
\bottomrule
\end{tabular}
\end{table}

\begin{table}[htbp]
\centering
\setlength{\tabcolsep}{5pt}   
\caption{A comparison of maximum $L^2$ error and convergence order in space with $\lambda=1$, $\delta=1.8$ and $N=1000$.}
\label{table:errors2s}
\begin{tabular}{c c c c c c c c c}
\toprule
& \multicolumn{4}{c}{$\alpha=0.25$} 
& \multicolumn{4}{c}{$\alpha=0.5$} \\
\cmidrule(lr){2-5} \cmidrule(lr){6-9}
 & \multicolumn{2}{c}{Our scheme \eqref{scheme}} & \multicolumn{2}{c}{L1 scheme \eqref{L1scheme}}   & \multicolumn{2}{c}{Our scheme \eqref{scheme}} & \multicolumn{2}{c}{L1 scheme \eqref{L1scheme}} \\
\cmidrule(lr){2-3} \cmidrule(lr){4-5} \cmidrule(lr){6-7} \cmidrule(lr){8-9}
\multirow{1}{*}{$M$} & Error & Order & Error & Order & Error & Order & Error & Order \\
\midrule
20
& 2.2664e-03 & -- 
& 2.2664e-03 & -- 
& 2.2122e-03 & -- 
& 2.2122e-03 & -- \\
40
& 5.6609e-04 & 2.0010 
& 5.6609e-04 & 2.0010 
& 5.5217e-04 & 2.0012 
& 5.5215e-04 & 2.0012 \\
80
& 1.4096e-04 & 2.0053 
& 1.4095e-04 & 2.0053 
& 1.3710e-04 & 2.0093 
& 1.3708e-04 & 2.0093 \\
\bottomrule
\end{tabular}
\end{table}

\begin{figure}[ht]
  \centering
  \includegraphics[width=0.5\textwidth]{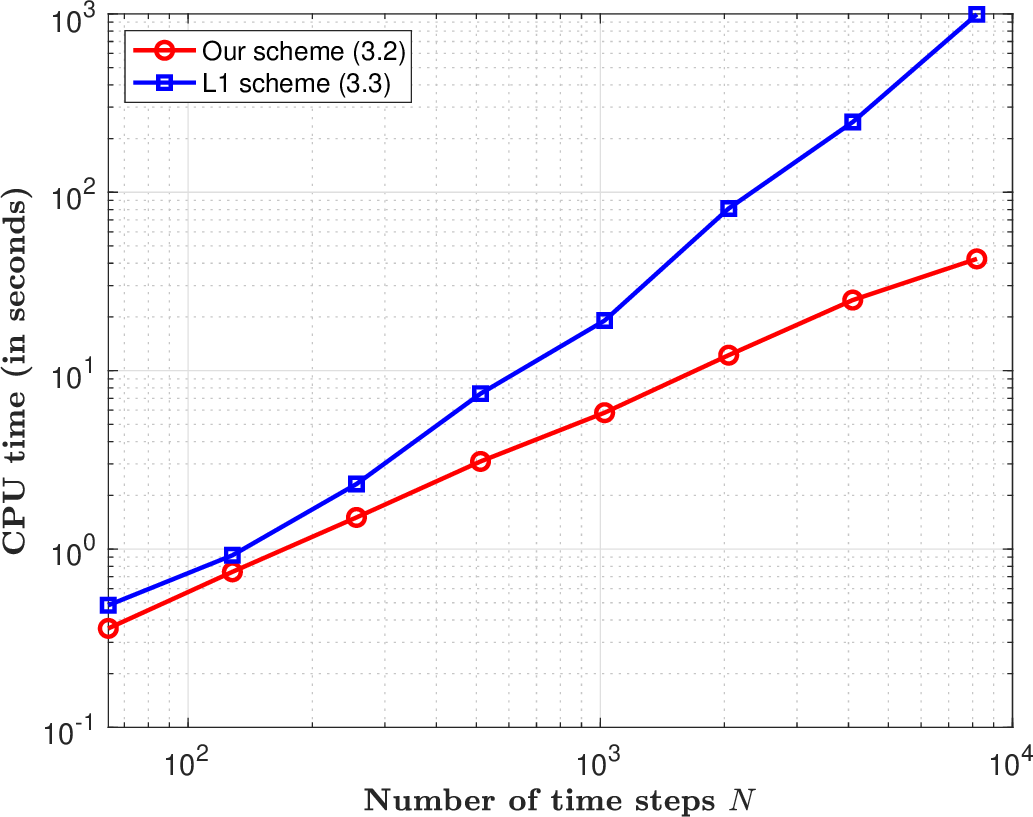}
\caption{ A comparison of CPU time in seconds as a function of time steps $N$.}\label{fig:cpu_compare2}
\end{figure}

\noindent\textbf{\underline{Case 3}}

We consider \eqref{Equation} with $\Omega=(0,1)$, $T=2$, $\phi(x)=x^4(1-x)^4$ and 
\begin{align}
f(x,t) = & ( -\lambda (e^{-x}t^\delta + 1) + \delta e^{-x} t^{\delta - 1} + \frac{\Gamma(\delta+1)}{\Gamma(\delta-\alpha+1)}e^{-x} t^{\delta-\alpha} ) e^{-\lambda t}x^4(1-x)^4 \notag\\
& - ( 12x^2(1-x)^2-4x^3(1-x)^3(3-2x)) (e^{-x}t^\delta + 1) e^{-\lambda t}\notag\\
& - (2x^4(1-x)^4-8x^3(1-x)^3(1-2x))e^{-x}t^\delta e^{-\lambda t}.\notag
\end{align}
The exact solution is then given by $u(x,t) = e^{-\lambda t} (e^{-x}t^\delta + 1)x^4(1-x)^4$ (see e.g.\cite{gao2012,jiang2017}). In what follows, we present our numerical results.    
\begin{table}[htbp]
\centering
\setlength{\tabcolsep}{5pt}   
\caption{A comparison of maximum $L^2$ error and convergence order in time with $\lambda=1$, $\delta=1.8$ and $M=2000$.}
\label{table:errors3t}
\begin{tabular}{c c c c c c c c c}
\toprule
& \multicolumn{4}{c}{$\alpha=0.25$} 
& \multicolumn{4}{c}{$\alpha=0.5$} \\
\cmidrule(lr){2-5} \cmidrule(lr){6-9}
 & \multicolumn{2}{c}{Our scheme \eqref{scheme}} & \multicolumn{2}{c}{L1 scheme \eqref{L1scheme}}   & \multicolumn{2}{c}{Our scheme \eqref{scheme}} & \multicolumn{2}{c}{L1 scheme \eqref{L1scheme}} \\
\cmidrule(lr){2-3} \cmidrule(lr){4-5} \cmidrule(lr){6-7} \cmidrule(lr){8-9}
\multirow{1}{*}{$N$} & Error & Order & Error & Order & Error & Order & Error & Order \\
\midrule
16
& 2.732e-05 & --     & 2.738e-05 & --     & 2.772e-05 & --     & 2.781e-05 & -- \\
32
& 6.843e-06 & 2.0932     & 6.857e-06 & 2.0934     & 7.023e-06 & 2.0760     & 7.050e-06 & 2.0751 \\
64
& 1.696e-06 & 2.0593     & 1.708e-06 & 2.0516     & 1.795e-06 & 2.0141     & 1.821e-06 & 1.9982 \\
\bottomrule
\end{tabular}
\end{table}
\begin{table}[htbp]
\centering
\setlength{\tabcolsep}{5pt}   
\caption{A comparison of maximum $L^2$ error and convergence order in time with $\lambda=1$, $\delta=1.8$ and $N=1000$.}
\label{table:errors3s}
\begin{tabular}{c c c c c c c c c}
\toprule
& \multicolumn{4}{c}{$\alpha=0.25$} 
& \multicolumn{4}{c}{$\alpha=0.5$} \\
\cmidrule(lr){2-5} \cmidrule(lr){6-9}
 & \multicolumn{2}{c}{Our scheme \eqref{scheme}} & \multicolumn{2}{c}{L1 scheme \eqref{L1scheme}}   & \multicolumn{2}{c}{Our scheme \eqref{scheme}} & \multicolumn{2}{c}{L1 scheme \eqref{L1scheme}} \\
\cmidrule(lr){2-3} \cmidrule(lr){4-5} \cmidrule(lr){6-7} \cmidrule(lr){8-9}
\multirow{1}{*}{$M$} & Error & Order & Error & Order & Error & Order & Error & Order \\
\midrule
20
& 1.3541e-01 & -- 
& 1.3541e-01 & -- 
& 1.3331e-01 & -- 
& 1.3331e-01 & -- \\
40
& 3.3352e-02 & 2.0206 
& 3.3352e-02 & 2.0206 
& 3.2839e-02 & 2.0210 
& 3.2839e-02 & 2.0210 \\
80
& 8.3053e-03 & 2.0054 
& 8.3053e-03 & 2.0054 
& 8.1767e-03 & 2.0056 
& 8.1765e-03 & 2.0056 \\
\bottomrule
\end{tabular}
\end{table}

Tables \ref{table:errors3t} and \ref{table:errors3s} present the numerical results for our scheme \eqref{scheme} and L1 scheme \eqref{L1scheme}, which show that our scheme has the same convergence order as the direct L1 scheme, in particular the second-order accuracy in both time and space, which is in perfect agreement with Theorem \ref{converge}. Furthermore, Figure~\ref{fig:cpu_compare3} presents a comparison of the CPU time (in seconds) for the different schemes. Similar to the observations in Cases~1 and~2, essentially the same conclusions are obtained, which further demonstrate both the accuracy and efficiency of the proposed method. Again, the above numerical tests demonstrate our theoretical analysis.
\begin{figure}[ht]
  \centering
  \includegraphics[width=0.5\textwidth]{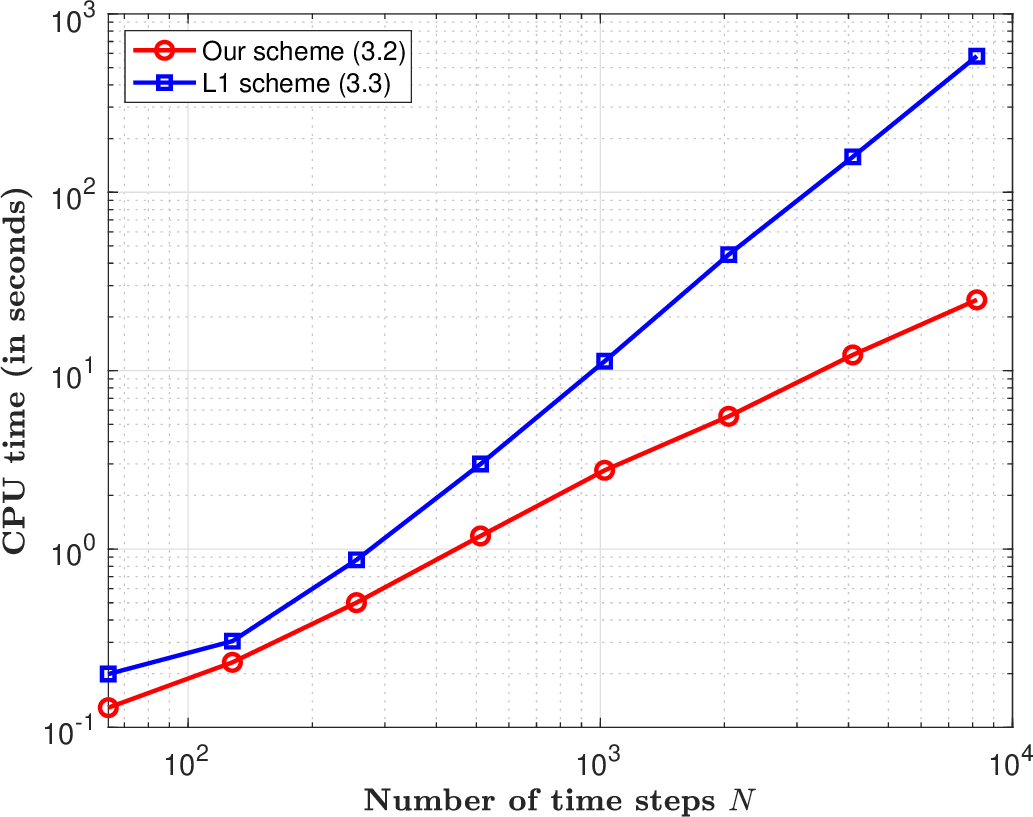}
\caption{ A comparison of CPU time in seconds as a function of time steps $N$.}\label{fig:cpu_compare3}
\end{figure}

\section{Concluding remarks}\label{section5}
In this paper, we develop a second-order finite difference scheme for the tempered time-fractional derivative exhibiting weak regularity at the initial time. The proposed scheme employs piecewise linear interpolation over each small temporal subinterval and yields a fully discrete approximation for the advection–dispersion equation based on fast sum-of-exponentials approximation. To enhance the overall accuracy, temporal differencing is carried out at half-time levels, while the spatial derivatives are approximated by centered differences. In contrast to earlier works, the proposed scheme achieves second-order convergence in both time and space, reduces the storage requirements and the computational costs while maintaining the
same global convergence order. The numerical results were presented to confirm the theoretical analysis results and demonstrate the developed technique is useful and effective for solving the considered model.
\vspace{18pt}

\noindent {\LARGE \bf Declarations}

\vspace{12pt}

\noindent{\bf Conflict of interest}\;\;
 The authors declare that they have no conflict of interest.


\vspace{18pt}


\begin{thebibliography}{1}

\bibitem{Alikhanov2024_1}
A.A. Alikhanov, M.S. Asl, C. Huang, A. Khibiev, A second-order difference scheme for the nonlinear time-fractional diffusion-wave equation with generalized memory kernel in the presence of time delay, Journal of Computational and Applied Mathematics. \textbf{438}, 115515 (2024)

\bibitem{beylkin2005}
G. Beylkin, L. Monzón, On approximation of functions by exponential sums, Applied and Computational Harmonic Analysis. \textbf{19}(1), 17--48 (2005)

\bibitem{Brunner1985}
H. Brunner, The numerical solution of weakly singular Volterra integral equations by collocation on graded meshes, Mathematics of Computation. \textbf{45}, 417--437 (1985)

\bibitem{cao2020}
J. Cao, A. Xiao, W. Bu, Finite difference/finite element method for tempered time fractional advection–dispersion equation with fast evaluation of Caputo derivative, Journal of Scientific Computing. \textbf{83}, 1--27 (2020)

\bibitem{cao2023}
J. Cao, A. Xiao, W. Bu, A fast Alikhanov algorithm with general nonuniform time steps for a two-dimensional distributed-order time–space fractional advection–dispersion equation, Numerical Methods for Partial Differential Equations. \textbf{39}(4), 2885--2908 (2023)

\bibitem{carr2002}
P. Carr, H. Geman, D. Madan, M. Yor, The fine structure of asset returns: An empirical investigation, The Journal of Business. \textbf{75}, 305--332 (2002)

\bibitem{carr2003}
P. Carr, H. Geman, D. Madan, M. Yor, Stochastic volatility for Lévy processes, Mathematical Finance. \textbf{13}, 345--382 (2003)

\bibitem{cartea2007}
Á. Cartea, D. del Castillo-Negrete, Fluid limit of the continuous-time random walk with general Lévy jump distribution functions, Physical Review E. \textbf{76}, 041105 (2007)

\bibitem{chen2007}
C.-M. Chen, F. Liu, I. Turner, V. Anh, A Fourier method for the fractional diffusion equation describing sub-diffusion, Journal of Computational Physics. \textbf{227}(2), 886--897 (2007)

\bibitem{chen2019}
H. Chen, D. Xu and J. Zhou, A second-order accurate numerical method with graded meshes for an evolution equation with a weakly singular kernel, Journal of Computational and Applied Mathematics. \textbf{356}, 152-163 (2019)

\bibitem{chen2018}
S. Chen, J. Shen and L.-L. Wang, Laguerre functions and their applications to tempered fractional differential equations on infinite intervals, Journal of Scientific Computing. \textbf{74}(3), 1286-1313 (2018)

\bibitem{dwivedi2024}
H. K. Dwivedi, Rajeev, A novel fast tempered algorithm with high-accuracy scheme for 2D tempered fractional reaction–advection–subdiffusion equation, Computers \& Mathematics with Applications. \textbf{148}, 371--397 (2024)

\bibitem{dwivedi2025}
H. K. Dwivedi, Rajeev, A novel fast second-order approach with high-order compact difference scheme and its analysis for the tempered fractional Burgers equation, Mathematics and Computers in Simulation. \textbf{213}, 1--25 (2025)

\bibitem{feng2022}
L. Feng, F. Liu, V. V. Anh, S. Qin, Analytical and numerical investigation on the tempered time-fractional operator with application to the Bloch equation and the two-layered problem, Nonlinear Dynamics. \textbf{109}(3), 2041--2061 (2022)

\bibitem{fenwick2024}
J. Fenwick, F. Liu, L. Feng, New insight into the nano-fluid flow in a channel with tempered fractional operators, Nanotechnology. \textbf{35}(8), 085403 (2024)

\bibitem{gao2012}
G. H. Gao, Z. Z. Sun, Y. N. Zhang, A finite difference scheme for fractional sub-diffusion equations on an unbounded domain using artificial boundary conditions, Journal of Computational Physics. \textbf{231}, 2865–2879 (2012)

\bibitem{gracia2018}
J. L. Gracia, E. O'Riordan, M. Stynes, A fitted scheme for a Caputo initial-boundary value problem, Journal of Scientific Computing. \textbf{76}, 703--727 (2018)

\bibitem{gregory1969}
R. T. Gregory and D. L. Karney, A collection of matrices for testing computational algorithms, Wiley-Interscience, London. (1969)
\bibitem{huang2018}
C. Huang, Z. Zhang and Q. Song, Spectral methods for substantial fractional differential equations, Journal of Scientific Computing.  \textbf{74}(3), 1554-1574 (2018)

\bibitem{jiang2017}
S. Jiang, J. Zhang, Q. Zhang and Z. Zhang, Fast evaluation of the Caputo fractional derivative and its applications to fractional diffusion equations,
Communications in Computational Physics.  \textbf{21}(3) , 650-678 (2017)

\bibitem{koponen1995}
I. Koponen, Analytic approach to the problem of convergence of truncated Lévy flights towards the Gaussian stochastic process, Physical Review E. \textbf{52}, 1197--1199 (1995)

\bibitem{krzyzanowski2024}
G. Krzyzanowski, M. Magdziarz, A tempered subdiffusive Black–Scholes model, Fractional Calculus and Applied Analysis. \textbf{27}, 1--29 (2024)

\bibitem{Liu2018}
Z. Liu, X. Li, A Crank–Nicolson difference scheme for the time variable fractional mobile–immobile advection–dispersion equation, Journal of Applied Mathematics and Computing. \textbf{56}, 391--410 (2018)

\bibitem{Liao2018}
H.-L. Liao, D. Li, J. Zhang, Sharp error estimate of the nonuniform L1 formula for linear reaction–subdiffusion equations, SIAM Journal on Numerical Analysis. \textbf{56}(2), 1112--1133 (2018)

\bibitem{Lyu2019}
P. Lyu, S. Vong, A high-order method with a temporal nonuniform mesh for a time-fractional Benjamin–Bona–Mahony equation, Journal of Scientific Computing. \textbf{80}(3), 1607--1628 (2019)

\bibitem{mantegna1994}
R. N. Mantegna, H. E. Stanley, Stochastic process with ultraslow convergence to a Gaussian: The truncated Lévy flight, Physical Review Letters. \textbf{73}, 2946--2949 (1994)

\bibitem{meerschaert2008}
M. M. Meerschaert, Y. Zhang, B. Baeumer, Tempered anomalous diffusion in heterogeneous systems, Geophysical Research Letters. \textbf{35}, L17403 (2008)

\bibitem{morgado2019}
L. Morgado, L. Morgado, Modelling time-of-flight transient currents with time-fractional diffusion equations, Progress in Fractional Differentiation and Applications. \textbf{5}, 1--10 (2019)

\bibitem{nong2024}
L. Nong, Q. Yi and A. Chen, A fast second-order ADI finite difference scheme for the two-dimensional time-fractional cattaneo equation with spatially variable coefficients, Fractal and Fractional. \textbf{8}(8), 453 (2024)


\bibitem{qiao2022}
L. Qiao, J. Guo, W. Qiu, Fast BDF2 ADI methods for the multi-dimensional tempered fractional integrodifferential equation of parabolic type, Computers \& Mathematics with Applications. \textbf{123}, 89--104 (2022)

\bibitem{RaviKanth2018}
A. S. V. Ravi Kanth and S. Deepika, Application and analysis of spline approximation for time fractional mobile–immobile advection-dispersion equation, Numerical Methods for Partial Differential Equations.  \textbf{34}(5), 1799-1819 (2018)

\bibitem{Vabishchevich2022}
P. N. Vabishchevich, Numerical solution of the Cauchy problem for Volterra integro-differential equations with difference kernels, Applied Numerical Mathematics. \textbf{174}, 177--190 (2022)

\bibitem{Vabishchevich2023}
P. N. Vabishchevich, Numerical-analytical methods for solving the Cauchy problem for evolutionary equations with memory, Lobachevskii Journal of Mathematics. \textbf{44}(10), 4195--4204 (2023)

\bibitem{wang2023}
L. Wang and H. Liang, Superconvergence and postprocessing of collocation methods for fractional differential equations
, Journal of Scientific Computing.  \textbf{97}(2), 29 (2023)

\bibitem{safari2022}
Z. Safari, G. B. Loghmani, M. Ahmadinia, Convergence analysis of an LDG method for time–space tempered fractional diffusion equations with weakly singular solutions, Journal of Scientific Computing. \textbf{91}(2), 68 (2022)

\bibitem{stynes2017}
M. Stynes, E. O’Riordan, J. L. Gracia, Error analysis of a finite difference method on graded meshes for a time-fractional diffusion equation, SIAM Journal on Numerical Analysis. \textbf{55}, 1057--1079 (2017)

\bibitem{sun2020}
L. Sun, H. Qiu, C. Wu, J. Niu and B. X. Hu, A review of applications of fractional advection–dispersion equations for anomalous solute transport in surface and subsurface water, WIREs Water. \textbf{7}(4), e1448 (2020)

\bibitem{sun2006}
Z.-Z. Sun, A fully discrete scheme for a diffusion-wave system, Applied Numerical Mathematics. \textbf{56}, 193--209 (2006)

\bibitem{wang2022}
H. Wang, C. Li, Fast difference scheme for a tempered fractional Burgers equation in porous media, Applied Mathematics Letters. \textbf{132}, 108143 (2022)

\bibitem{xia2013}
Y. Xia, J. Wu, Y. Zhang, Tempered time-fractional advection–dispersion equation for modeling non-Fickian transport, Advances in Water Science. \textbf{24}(3), 349--357 (2013)

\bibitem{zhang2021}
H. Zhang, X. Jiang and F. Liu, Error analysis of nonlinear time fractional mobile/immobile advection-diffusion equation with weakly singular solutions, Fractional Calculus and Applied Analysis. \textbf{24}(1), 202-224 (2021)


\bibitem{zhang2011}
Y. Zhang, M. M. Meerschaert, Gaussian setting time for solute transport in fluvial systems, Water Resources Research. \textbf{47}, W08601 (2011)

\bibitem{zhang2012}
Y. Zhang, M. M. Meerschaert, A. Packman, Linking fluvial bed sediment transport across scales, Geophysical Research Letters. \textbf{39}, L20404 (2012)

\bibitem{zhang2014}
Y. Zhang, L. Chen, D. Reeves, H. Sun, A fractional-order tempered-stable continuity model to capture surface water runoff, Journal of Vibration and Control. \textbf{22}, 1993--2003 (2016)

\bibitem{zhao2020}
L. Zhao, C. Li, F. Zhao, Efficient difference schemes for the Caputo-tempered fractional diffusion equations based on polynomial interpolation, Communications on Applied Mathematics and Computation. \textbf{2}, 534--558 (2020)

\bibitem{zhao2023}
T. Zhao, Efficient spectral collocation method for tempered fractional differential equations, Fractal and Fractional. \textbf{7}(3), 277 (2023)

\bibitem{zhao2024}
T. Zhao, L. Zhao, Efficient Jacobian spectral collocation method for spatio-dependent temporal tempered fractional Feynman–Kac equation, Communications on Applied Mathematics and Computation. \textbf{6}, 1--25 (2024)

\bibitem{zhao2021}
Y.-L. Zhao, X.-M. Gu, M. Li, H.-Y. Jian, Preconditioners for all-at-once system from the fractional mobile/immobile advection–diffusion model, Journal of Applied Mathematics and Computing. \textbf{65}, 669--691 (2021)

\bibitem{zheng2025}
Z.-Y. Zheng, Y.-M. Wang, A fast temporal second-order compact finite difference method for two-dimensional parabolic integro-differential equations with weakly singular kernel, Journal of Computational Science \textbf{87}, 102558 (2025)

\bibitem{zhou2024}
J. Zhou, X.-M. Gu, Y.-L. Zhao, H. Li, A fast compact difference scheme with unequal time-steps for the tempered time-fractional Black–Scholes model, International Journal of Computer Mathematics \textbf{101}(9--10), 989--1011 (2024)

\bibitem{zhu2019}
H. Zhu and C. Xu, A fast high order method for the time-fractional diffusion equation, SIAM Journal on Numerical Analysis \textbf{57}(6), 2829-2849 (2019)

\end{thebibliography}
\end{document}